\documentclass{cmslatex}
\usepackage[paperwidth=7in, paperheight=10in, margin=.875in]{geometry}
 \usepackage[backref,colorlinks,linkcolor=red,anchorcolor=green,citecolor=blue]{hyperref}
\usepackage{amsfonts,amssymb}
\usepackage{amsmath}
\usepackage{graphicx}
\usepackage{enumerate}
\sloppy

\thinmuskip = 0.5\thinmuskip \medmuskip = 0.5\medmuskip
\thickmuskip = 0.5\thickmuskip \arraycolsep = 0.3\arraycolsep

\renewcommand{\theequation}{\arabic{section}.\arabic{equation}}

\allowdisplaybreaks

\usepackage{commath}
\usepackage{mathtools}

\usepackage{enumitem}

\usepackage{caption}
\usepackage{subcaption}

\newcommand*{\R}{\mathbb{R}}

\usepackage{dsfont}
\newcommand{\avg}[1]{\mathds{E}\!\left[#1\right]}
\newcommand{\var}[1]{\mathds{V}\!\left[#1\right]}

\newcommand{\T}{^{\!\!\intercal}}

\newcommand{\Gauss}[1]{\mathcal{N}\!\left(#1\right)}
\newcommand{\isGauss}[1]{\sim \Gauss{#1}}

\let\oldepsilon\epsilon
\renewcommand\epsilon\varepsilon

\newcommand{\setc}[2]{\left\{#1\,\middle|\,#2\right\}}

\RequirePackage[disable,colorinlistoftodos,prependcaption,textsize=scriptsize]{todonotes}
\RequirePackage{xparse, xargs}
\newcommandx{\note}[2][1=]{
	\todo[inline,linecolor=black,backgroundcolor=violet!5,bordercolor=violet,#1]{
		NOTE: #2
	}
}
\newcommandx{\td}[2][1=]{
	\todo[inline,linecolor=orange,backgroundcolor=orange!5,bordercolor=orange,tickmarkheight=2cm,#1]{
		TODO: #2
	}
}
\newcommandx{\lb}[2][1=]{
\todo[linecolor=teal,backgroundcolor=teal!5,bordercolor=teal,#1]{
		#2
	}
}

\newcommandx{\jm}[2][1=]{
\todo[linecolor=violet,backgroundcolor=violet!5,bordercolor=violet,#1]{
		#2}}

\newcommandx{\sab}[2][1=]{
\todo[linecolor=green,backgroundcolor=green!5,bordercolor=green,#1]{
		#2
	}
}

\newcommand{\numberthis}{\addtocounter{equation}{1}\tag{\theequation}}

\newtheorem{hypo}{Hypothesis}[section]

\usepackage[capitalise,nameinlink,noabbrev]{cleveref}
\crefname{hypo}{Hypothesis}{Hypotheses}
\crefname{equation}{}{}

\usepackage[square,numbers,sort]{natbib}

\newcommand{\email}[1]{\protect\href{mailto:#1}{#1}}


 \title{The convergence of stochastic differential equations to their linearisation in small noise limits\thanks{Received date, and accepted date. \\
 LB acknowledges support from an Australian Government Research Training Program Scholarship. SB acknowledges with thanks partial support from the Australian Research Council via grant DP200101764.}}

\author{Liam Blake\thanks{School of Computer and Mathematical Sciences, University of Adelaide, Adelaide 5005, Australia (corresponding author: \email{liam.blake@adelaide.edu.au}).} \and John Maclean\thanks{School of Computer and Mathematical Sciences, University of Adelaide, Adelaide 5005, Australia.} \and Sanjeeva Balasuriya\footnotemark[3]}

\begin{document}
\pagestyle{myheadings}
\markboth{CONVERGENCE OF SDES TO THEIR LINEARISATIONS}{BLAKE, MACLEAN \& BALASURIYA}

\maketitle

\begin{abstract}
	Prediction via deterministic continuous-time models will always be subject to model error, for example due to unexplainable phenomena, uncertainties in any data driving the model, or discretisation/resolution issues. In this paper, we build upon previous small-noise studies to provide an explicit bound for the error between a general class of stochastic differential equations and corresponding computable linearisations written in terms of a deterministic system. Our framework accounts for non-autonomous coefficients, multiplicative noise, and uncertain initial conditions. We demonstrate the predictive power of our bound on diverse numerical case studies. We confirm that our bound is \emph{sharp}, in that it accurately predicts the error scaling in the moments of the linearised approximation as both the uncertainty in the initial condition and the magnitude of the noise in the differential equation are altered. This paper also provides an extension of stochastic sensitivity, a recently introduced tool for quantifying uncertainty in dynamical systems, to arbitrary dimensions and establishes the link to our characterisation of stochastic differential equation linearisations.
\end{abstract}

\begin{keywords}
	Stochastic differential equations; Gaussian approximations; uncertainty quantification; multiplicative noise
\end{keywords}

\begin{AMS}
	34F05; 60H10; 60H35
\end{AMS}

\section{Introduction}
Stochastic differential equations (SDEs) are a natural framework for introducing different sources of uncertainty 
into the continuous time evolution of a variable \cite{Oksendal_2003_StochasticDifferentialEquations,SarkkaSolin_2019_AppliedStochasticDifferential,KallianpurSundar_2014_StochasticAnalysisDiffusion}.
Generally, in modelling situations the dynamics are highly nonlinear and one expects the noise to be multiplicative (i.e. vary with state), e.g. in atmospheric \cite{SuraEtAl_2005_MultiplicativeNoiseNonGaussianity} and oceanic \cite{KamenkovichEtAl_2015_PropertiesOriginsAnisotropic} systems and from experimental and observational considerations.
Such SDEs are usually intractable to solve analytically and computationally expensive to approximate accurately.
Data-based models---that is, models possessing terms in the equations which are driven by data rather than by explicitly specified functions---and uncertainty in the initial state render additional problems in obtaining a theoretical understanding of the stochastic system.

A common approach to characterising and approximating the solution of nonlinear SDEs with small noise is via a ``linearisation'' through time about a single deterministic trajectory.
A linearised stochastic differential equation, obtained by truncating Taylor expansions of each coefficient \cite[e.g.]{Jazwinski_2014_StochasticProcessesFiltering,Blagoveshchenskii_1962_DiffusionProcessesDepending}, can be solved analytically and is accordingly used across a diversity of literature and applications \cite{Jazwinski_2014_StochasticProcessesFiltering,SarkkaSolin_2019_AppliedStochasticDifferential,KaszasHaller_2020_UniversalUpperEstimate,ArchambeauEtAl_2007_GaussianProcessApproximations,Sanz-AlonsoStuart_2017_GaussianApproximationsSmall,LawEtAl_2015_DataAssimilationMathematical,ReichCotter_2015_ProbabilisticForecastingBayesian,BudhirajaEtAl_2019_AssimilatingDataModels}.

Much is already known about these linearisations; classical results in the context of small-noise series expansions \cite{Blagoveshchenskii_1962_DiffusionProcessesDepending} and large deviations theory \cite{FreidlinWentzell_1998_RandomPerturbationsDynamical} show that the strong error between SDE solution with a fixed initial condition and that of an appropriate linearisation is bounded.
\citet{Sanz-AlonsoStuart_2017_GaussianApproximationsSmall} establish a strong result, bounding the Kullback-Leibler divergence between the solutions of autonomous SDEs with additive stationary noise and a linearised equivalent. Their result considers both an uncertain initial condition, and the evolving error due to the discrepancy between the models.

In \cref{sec:theory}, we directly bound the expectation of all moments of the distance between the linearisation and the true solution of the stochastic differential equation.
Our result extends the former convergence result of \cite{Sanz-AlonsoStuart_2017_GaussianApproximationsSmall} in three ways: we predict the scaling of all moments of the error, in the presence of multiplicative noise terms, and including time-dependent coefficients in the stochastic differential equations. We directly compare our results with those of \cite{Sanz-AlonsoStuart_2017_GaussianApproximationsSmall} in \cref{sec:comparison}.
In \Cref{sec:numerics}, we show numerically that the scaling of the strong error predicted by \Cref{thm:main} matches our predictions on three example SDEs in \(1\)- and \(2\)-dimensions.

The second contribution of the paper is to the notion of ``stochastic sensitivity'' \cite{Balasuriya_2020_StochasticSensitivityComputable}, which seeks to explicitly quantify the impact of vector field uncertainty on the solution trajectories of dynamical systems \cite{BranickiUda_2021_LagrangianUncertaintyQuantification,KaszasHaller_2020_UniversalUpperEstimate,BranickiUda_2023_PathBasedDivergenceRates,Balibrea-IniestaEtAl_2016_LagrangianDescriptorsStochastic}.
This methodology was originally developed by Balasuriya \cite{Balasuriya_2020_StochasticSensitivityComputable} as a tool for determining Lagrangian coherent structures (LCS) \cite{BalasuriyaEtAl_2018_GeneralizedLagrangianCoherent,HadjighasemEtAl_2017_CriticalComparisonLagrangian} in fluid flows, in that clusters of trajectories which have small uncertainty may be thought of as more ``coherent'' than others.
In \Cref{sec:theory_s2}, we generalise the (formerly two-dimensional) stochastic sensitivity to arbitrary dimensions by proving that the stochastic sensitivity can be computed as the largest eigenvalue of the covariance matrix of the solution to the linearised equation from \cref{sec:theory}.
Our expressions extend \cite{Balasuriya_2020_StochasticSensitivityComputable} by empowering a rapid computation of stochastic sensitivity, as previously the computation required three integrals of particular rotations of the gradient of the flow map.
We demonstrate this computation in \cref{sec:comput_s2}.

\section{Convergence of a SDE to a linearisation}\label{sec:theory}
Suppose we are interested in the evolution of a \(\R^n\)-valued state variable \(y_t\) over a finite time interval \([0,T]\).
Our model, accounting for uncertainties arising from a range of sources, for the evolution of this variable is the It\^o stochastic differential equation
\begin{equation}
	\dif y_t^{(\epsilon)} = u\!\left(y_t^{(\epsilon)}, t\right)\dif t + \epsilon \, \sigma\!\left(y_t^{(\epsilon)}, t\right)\dif W_t, \quad y_0^{(\epsilon)} = x
	\label{eqn:sde_y}
\end{equation}
where \(u\colon \R^n \times [0,T] \to \R^n\) is the governing reference vector field. 
The canonical \(m\)-dimensional Wiener process \(W_t\)  is a continuous white-noise stochastic process with independent Gaussian increments.
The scale of the ongoing noise is assumed to be small and is parameterised as \(0 < \epsilon \ll 1\).
The noise in \cref{eqn:sde_y} is multiplicative, in that the diffusion matrix \(\sigma\colon \R^n \times [0,T] \to \R^{n\times m}\) can vary with state \( x \), as well as with time \( t \).
We assume that \(\sigma\) is specified \textit{a priori}, or if no such information is known, then \(\sigma \equiv I\), the \(n \times m\) identity matrix, is a default modelling choice.
We consider \cref{eqn:sde_y} subject to the \emph{general} uncertain initial condition \(y_0^{(\epsilon)} = x\), where \(x\) is an \(n\)-dimensional random vector with some given distribution. The two sources of randomness, $ x $ and $ W_t $, are assumed independent from each other.

In the absence of any uncertainty (i.e. \(\epsilon = 0\) and the initial condition is a known deterministic quantity), \cref{eqn:sde_y} reduces to the ordinary differential equation
\begin{equation}
	\dod{y_t^{(0)}}{t} = u\!\left(y_t^{(0)}, t\right), \quad y_0^{(0)} = x_0.
	\label{eqn:ode_det}
\end{equation}
where the initial condition \(x_0 \in \R^n\) is fixed.
The formal convergence of the stochastic solution \(y_t^{(\epsilon)}\) (under certain conditions on the initial condition) to the deterministic \(y_{t}^{(0)}\) in the limit as \(\epsilon \to 0\) is well-established using the large deviations principle \cite[e.g]{FreidlinWentzell_1998_RandomPerturbationsDynamical}.
We refer to \cref{eqn:ode_det} as the \emph{reference} deterministic model associated with \cref{eqn:sde_y}.
Solutions to the reference deterministic model are more readily available, e.g. in terms of computational efficiency when solving numerically, than those of the stochastic model, but do not account for inevitable uncertainty.

Let the flow map \(F_{0}^{t}\colon \R^n \to \R^n\) be the function which evolves an initial condition from time \(0\) to time \(t\) according to the flow of \cref{eqn:ode_det}, i.e. \(F_0^t\!\left(x_0\right) = y_t^{(0)}\).

We assume certain smoothness and boundedness conditions on the various terms outlined, which are stated explicitly in \Cref{hyp:smooth}.
Throughout this article, we use the norm symbol \(\norm{\cdot}\) to denote (i) for a vector, the standard Euclidean vector norm, (ii) for a matrix, the spectral norm induced by the Euclidean norm, and (iii) for a 3rd-order tensor, the spectral norm induced by the matrix norm.
The gradient symbol \(\nabla\) generically refers to derivatives with respect to the state variable.

\renewcommand\thehypo{H}
\begin{hypo}\label{hyp:smooth}
	Let the deterministic functions \(u \colon \R^n\times [0,T] \to \R^n\) and \(\sigma \colon \R^n \times [0,T] \to \R^{n\times m}\), and the random initial condition \(x\) be such that:
	\begin{enumerate}[label=(H.\arabic{*}), ref=H.\arabic{*}]
		\item\label{hyp:fm_exists} For all \(t \in [0,T]\) flow map \(F_0^t \colon \R^n \to \R^n\) is well-defined, and continuously differentiable (with respect to the initial condition) with invertible derivative.

		\item\label{hyp:coef_cont} For each \(t \in [0,T]\), the function \(u(\cdot, t)\colon \R^n \to \R^n\) given by \(u(x,t)\) is twice continuously differentiable on \(\R^n\), and each component of the function \(\sigma(\cdot, t)\colon \R^n \to \R^{n\times m}\) given by \(\sigma(x,t)\) is differentiable on \(\R^n\).

		\item\label{hyp:u_bounds} There exists a constant \(K_{\nabla u} \geq 0\) such that for any \(t \in [0,T]\) and \(x \in \R^n\),
		\begin{equation*}
			\norm{\nabla u(x,t)} \leq K_{\nabla u}.
		\end{equation*}
		Equivalently, for all \(t \in [0,T]\), the function \(u\!\left(\cdot, t\right)\) is Lipschitz continuous with Lipschitz constant \(K_{\nabla u}\).

		\item\label{hyp:coef_meas} For each \(x \in \R^n\), the function \(u(x,\cdot) \colon [0,T] \to \R^n\) and each component of the function \(\sigma(x,\cdot) \colon [0,T] \to \R^{n\times m}\) are Borel-measurable on \([0,T]\).

		\item\label{hyp:linear_growth} There exists a constant \(K_L\) such that for any \(t \in [0,T]\) and \(x \in \R^n\),
		\[
			\norm{u\left(x,t\right)} + \norm{\sigma\left(x,t\right)} \leq K_L\left(1 + \norm{x}\right).
		\]

		\item\label{hyp:sigma_deriv_bound} There exists a constant \(K_{\nabla\sigma} \geq 0\) such that for any \(t \in [0,T]\) and \(x \in \R^n\),
		\begin{equation*}
			\norm{\nabla\sigma(x,t)} \leq K_{\nabla\sigma},
		\end{equation*}
		and we take \(K_{\nabla\sigma} = 0\) if there is no spatial dependence in \(\sigma\).
		Equivalently, for all \(t \in [0,T]\), the function \(\sigma\!\left(x, \cdot\right)\) is Lipschitz continuous with Lipschitz constant \(K_{\nabla\sigma}\).

		\item\label{hyp:init_indep} The initial condition \(x\) is defined on the same probability space as \(W_t\), and is independent of \(W_t\) for all \(t \in [0,T]\).

		\item\label{hyp:nnu_bounds} There exists a constant \(K_{\nabla\nabla u} \geq 0\) such that for any \(t \in [0,T]\) and \(x \in \R^n\),
		\[
			\norm{\nabla \nabla u(x,t)} \leq K_{\nabla\nabla u},
		\]
		and we take \(K_{\nabla\nabla} = 0\) if the second spatial derivatives of \(u\) are all zero.

		\item\label{hyp:sigma_bounds} There exists a constant \(K_\sigma \geq 0\) such that for any \(t \in [0,T]\) and \(x \in \R^n\),
		\begin{equation*}
			\norm{\sigma(x,t)} \leq K_{\sigma}.
		\end{equation*}

	\end{enumerate}
\end{hypo}
The conditions \ref{hyp:coef_cont} to \ref{hyp:init_indep} guarantee that \cref{eqn:sde_y} with the initial condition \(y_0 = x\) has a unique strong solution \cite{KallianpurSundar_2014_StochasticAnalysisDiffusion}.
The bound \(K_{\nabla\nabla u}\) placed on the second derivatives of \(u\) in \ref{hyp:nnu_bounds} quantifies exactly when the deterministic dynamics (that is, \(u\)) of \cref{eqn:sde_y} are linear.
Similarly, the bound \(K_{\nabla\sigma}\) on the spatial derivatives of \(\sigma\) in \ref{hyp:sigma_deriv_bound} allows us to distinguish when the noise in \cref{eqn:sde_y} is multiplicative.

Our aim is to construct and formally justify a computable linearisation of \cref{eqn:sde_y} about a trajectory solving the deterministic system \cref{eqn:ode_det}.
To that end, we take a \emph{fixed} initial condition \(x_0 \in \R^n\) to the reference deterministic model \cref{eqn:ode_det} and consider linearising the SDE \cref{eqn:sde_y} about the corresponding trajectory \(F_0^t\!\left(x_0\right)\).
We consider the following linearisation of \cref{eqn:sde_y}:
\begin{equation}
	\dif l_t^{(\epsilon)} = \left[u\!\left(F_0^t\!\left(x_0\right), t\right) + \nabla u\!\left(F_0^t\!\left(x_0\right), t\right)\left(l_t^{(\epsilon)} - F_0^t\!\left(x_0\right)\right)\right] \! \dif t + \epsilon\sigma\!\left(F_0^t\!\left(x_0\right), t\right) \! \dif W_t, \quad l_0^{(\epsilon)} = x,
	\label{eqn:linear_sde_inform}
\end{equation}
where the initial condition \(x \) is still permitted to be random.
Informally, we can arrive at \cref{eqn:linear_sde_inform} by performing a Taylor expansion of the coefficient \(u\) up to first-order and \(\sigma\) to zeroth-order about the time-varying trajectory \(F_0^t\!\left(x_0\right)\).
Such a linearisation is advantageous over the nonlinear SDE \cref{eqn:sde_y}, since \cref{eqn:linear_sde_inform} can be solved analytically.
We will later (see \Cref{cor:limit_moments}) provide explicit expressions for computing the distribution of the solution \(l_t^{(\epsilon)}\) solely in terms of the solution dynamics of the deterministic system \cref{eqn:ode_det}, the specified diffusion matrix \(\sigma\), and the distribution of \(x\).

In order to quantify the error arising from the choice of reference point \(x_0\), we define
\[
	\delta_r \coloneqq \avg{\norm{x - x_0}^r}^{1/r},
\]
i.e. \(\delta_r\) is the \(L_r\) distance between \(x\) and the deterministic point \(x_0\).
We can think of \(\delta_r\) as a scalar measure of the uncertainty in the initial condition, relative to the choice of reference point \(x_0\).
Alternatively, the limit as \(\delta_r\) approaches zero is equivalent to convergence in \(r\)th mean of \(x\) to the fixed point \(x_0\).
We can therefore distinguish two sources of uncertainty in our model; that arising from the initial condition, quantified by \(\delta_r\), and the ongoing uncertainty driven by the Wiener process \(W_t\) as measured by \(\epsilon\).

Our first and primary result, \Cref{thm:main}, provides an explicit bound on the \(r\)th moment of the error between the SDE solution \(y_t^{(\epsilon)}\) and the linearised solution \(l_t^{(\epsilon)}\).

\begin{theorem}[Linearisation error is bounded]\label{thm:main}
	Let \(y_t^{(\epsilon)}\) be the strong solution to the SDE \cref{eqn:sde_y} and \(l_t^{(\epsilon)}\) be the strong solution to the corresponding linearisation \cref{eqn:linear_sde_inform}, both driven by the same Wiener process \(W_t\) and subject to the same random initial condition \(y_0^{(\epsilon)} = l_0^{(\epsilon)} = x\).
	Then, for any \(r \geq 1\) such that \(\delta_{2r} < \infty\) and \(t \in [0,T]\), there exist constants \( D_1\!\left(r,t, K_{\nabla u}, K_{\sigma}\right), \, D_2\!\left(r,t, K_{\nabla u}\right), \, D_3\!\left(r,t, K_{\nabla u}\right) \in [0, \infty) \) independent of \(x\) and \(x_0\) such that for all \(\epsilon > 0\),
	\begin{equation}
		\avg{\norm{y_t^{(\epsilon)} - l_t^{(\epsilon)}}^r} \leq \begin{multlined}[t]
			\left(K_{\nabla\nabla u}^r + K_{\nabla\sigma}^r\right) D_1\!\left(r,t, K_{\nabla u}, K_\sigma\right)\, \epsilon^{2r} \\
			+ K_{\nabla\nabla u}^r D_2\!\left(r,t, K_{\nabla u}\right)\delta_{2r}^{2r}
			+ K_{\nabla\sigma}^r D_3\!\left(r,t, K_{\nabla u}\right)\delta_r^r \epsilon^r.
		\end{multlined}
		\label{eqn:main_ineq}
	\end{equation}
\end{theorem}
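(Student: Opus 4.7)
The plan is to analyse the error process $z_t \coloneqq y_t^{(\epsilon)} - l_t^{(\epsilon)}$, which satisfies $z_0 = 0$, by writing it as the solution of an SDE driven by Taylor remainders, then applying standard $L^r$ estimates (H\"older for the drift, Burkholder-Davis-Gundy for the stochastic integral) and closing with Gr\"onwall's inequality. Writing $F_t \coloneqq F_0^t(x_0)$ for brevity, a first-order Taylor expansion of $u$ about $F_t$ and a zeroth-order expansion of $\sigma$ about $F_t$ give
\begin{equation*}
  \dif z_t = \bigl[\nabla u(F_t,t)\, z_t + R_u(t)\bigr]\dif t + \epsilon\, R_\sigma(t)\,\dif W_t,
\end{equation*}
where the integral-form Taylor remainders satisfy $\|R_u(t)\| \leq \tfrac{1}{2}K_{\nabla\nabla u}\|y_t^{(\epsilon)} - F_t\|^2$ by \ref{hyp:nnu_bounds} and $\|R_\sigma(t)\| \leq K_{\nabla\sigma}\|y_t^{(\epsilon)} - F_t\|$ by \ref{hyp:sigma_deriv_bound}.

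Applying $(a+b)^r \leq 2^{r-1}(a^r+b^r)$ twice, H\"older's inequality on the drift integral, and the Burkholder-Davis-Gundy inequality on the It\^o integral, together with $\|\nabla u\| \leq K_{\nabla u}$ from \ref{hyp:u_bounds}, yields a bound of the form
\begin{equation*}
  \avg{\norm{z_t}^r} \leq C_r(t,K_{\nabla u})\int_0^t \avg{\norm{z_s}^r}\dif s + A_r(t) + B_r(t),
\end{equation*}
where $A_r(t)$ collects $K_{\nabla\nabla u}^r\int_0^t \avg{\|y_s^{(\epsilon)} - F_s\|^{2r}}\dif s$ (the drift remainder) and $B_r(t)$ collects $\epsilon^r K_{\nabla\sigma}^r$ times a time integral of $\avg{\|y_s^{(\epsilon)} - F_s\|^r}$ arising from the BDG estimate (using H\"older to pass from the quadratic variation for $r\geq 2$, and dominating directly for $r \in [1,2)$).

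The key auxiliary step is the moment estimate
\begin{equation*}
  \avg{\norm{y_s^{(\epsilon)} - F_s}^{r'}} \leq C(r',s,K_{\nabla u},K_\sigma)\bigl(\epsilon^{r'} + \delta_{r'}^{r'}\bigr), \qquad r' \in \{r,2r\},
\end{equation*}
obtained by splitting $y_s^{(\epsilon)} - F_0^s(x_0) = (y_s^{(\epsilon)} - F_0^s(x)) + (F_0^s(x) - F_0^s(x_0))$. The first summand is bounded by a standard small-noise estimate, using BDG together with \ref{hyp:sigma_bounds} to produce the $\epsilon^{r'}$ contribution and a Gr\"onwall step driven by the Lipschitz constant $K_{\nabla u}$; the second summand is handled by the flow-map Lipschitz bound $\|F_0^s(x) - F_0^s(x_0)\| \leq e^{K_{\nabla u} s}\|x - x_0\|$, whose $L^{r'}$ norm is $\delta_{r'}$. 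Substituting this estimate back into $A_r(t)$ and $B_r(t)$ produces precisely the three inhomogeneous terms appearing in \cref{eqn:main_ineq}, and a final application of Gr\"onwall's inequality absorbs the $\int_0^t\avg{\|z_s\|^r}\dif s$ term into the constants $D_1,D_2,D_3$.

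The main obstacle is bookkeeping: one has to track which constants depend on $K_{\nabla u}, K_\sigma, t, r$ (and are allowed into $D_1,D_2,D_3$) versus which multiplicative factors of $K_{\nabla\nabla u}$ and $K_{\nabla\sigma}$ must be pulled outside. In particular, the cross-term $K_{\nabla\sigma}^r \delta_r^r \epsilon^r$ arises only via the BDG route applied to $R_\sigma$ combined with the $\delta_r$ piece of the auxiliary moment estimate, and must be separated cleanly from the $\epsilon^{2r}$ contribution (which comes from the $\epsilon^r$ piece of the auxiliary estimate). Once this accounting is made carefully, Gr\"onwall finishes the argument with the three claimed prefactor structures.
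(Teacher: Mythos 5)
Your proposal is correct and follows essentially the same route as the paper's proof: the same decomposition of $y_t^{(\epsilon)}-l_t^{(\epsilon)}$ into a second-order Taylor remainder of $u$, a linear Gr\"onwall term driven by $\nabla u(F_0^t(x_0),t)$, and a stochastic integral of the Lipschitz increment of $\sigma$, with the same auxiliary moment bound $\avg{\|y_s^{(\epsilon)}-F_0^s(x_0)\|^{q}}\lesssim \epsilon^{q}+\delta_q^q$ (the paper's Lemma B.1) feeding the $q=2r$ and $q=r$ cases into the drift and diffusion remainders respectively. The only cosmetic differences are that you obtain the auxiliary estimate by splitting off $F_0^s(x)-F_0^s(x_0)$ rather than carrying $x-x_0$ through a single Gr\"onwall argument, and that you explicitly flag the $r\in[1,2)$ case in the BDG step, which the paper glosses over.
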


\begin{proof}
	See \Cref{app:main_thm_proof}.
	Our proof employs the Burkholder-Davis-Gundy inequality, Gr\"onwall's inequality, and Taylor's theorem to explicitly construct the bounding coefficients in terms of the conditions on the SDE coefficients set out in \Cref{hyp:smooth}.
	The bounding coefficients \(D_1\), \(D_2\), and \(D_3\) are given explicitly in \cref{eqn:bound_defns}.
\end{proof}

In \cref{eqn:main_ineq}, we have an explicit scaling of the error in terms of \(\epsilon\) and \(\delta_r\).
The three terms can be informally considered as: a contribution purely from the ongoing linearisation error, a contribution purely from the initial uncertainty, and a term resulting from the interaction between the initial and ongoing uncertainties.
By explicitly identifying the dependence of the bound on \(K_{\nabla\nabla u}\) and \(K_{\nabla \sigma}\), we note three special cases that are summarised by \Crefrange{rem:bound_linear}{rem:bound_exact}.

\begin{remark}[Linear drift]\label{rem:bound_linear}
	When the deterministic dynamics are linear, we set \(K_{\nabla\nabla u} = 0\) and \cref{eqn:main_ineq} becomes
	\[
		\avg{\norm{y_t^{(\epsilon)} - l_t^{(\epsilon)}}^r} \leq   K_{\nabla\sigma}^r D_1\!\left(r, t, K_{\nabla u}, K_\sigma\right)\, \epsilon^{2r} + K_{\nabla\sigma}^r D_3\!\left(r,t, K_{\nabla u}\right)\delta_r^r \epsilon^r.
	\]
	The linearisation of the drift term \(u\) is exact, so the error is purely due to the spatial dependency of the diffusion term \(\sigma\).
\end{remark}

\begin{remark}[Additive noise]\label{rem:bound_additive}
	When the noise in \cref{eqn:sde_y} is additive (non-multiplicative), we set \(K_{\nabla\sigma} = 0\) and \cref{eqn:main_ineq} becomes
	\[
		\avg{\norm{y_t^{(\epsilon)} - l_t^{(\epsilon)}}^r} \leq   K_{\nabla\nabla u}^r D_1\!\left(r,t, K_{\nabla u}, K_\sigma\right)\, \epsilon^{2r} + K_{\nabla\nabla u}^r D_2\!\left(r,t, K_{\nabla u}\right)\delta_{2r}^{2r}.
	\]
	The error is then purely due to the linearisation of the drift term \(u\), and as expected is of second order in both the initial condition uncertainty \(\delta_{2r}\) and the ongoing uncertainty \(\epsilon\).
\end{remark}

\begin{remark}[Exact linearisation]\label{rem:bound_exact}
	When the deterministic dynamics are linear and the noise in \cref{eqn:sde_y} is additive (non-multiplicative), the linearisation \cref{eqn:linear_sde_inform} should be exact.
	Accordingly, we set \(K_{\nabla\nabla u} = K_{\nabla\sigma} = 0\) and \cref{eqn:main_ineq} becomes
	\[
		\avg{\norm{y_t^{(\epsilon)} - l_t^{(\epsilon)}}^r} = 0.
	\]
	In turn, this implies that \(y_t^{(\epsilon)} = l_t^{(\epsilon)}\) almost surely, for any choice of reference point \(x_0\).
\end{remark}

We postpone a discussion of an additional special case---where the initial condition is fixed or
Gaussian---to a later section.  For the general situation,
we next explicitly establish the solution to the linearisation \cref{eqn:linear_sde_inform}, in terms of the initial condition and the deterministic evolution of \cref{eqn:ode_det}.

\begin{theorem}[Solution of the linearised SDE]\label{thm:limit_sol}
	The strong solution to the linearised SDE \cref{eqn:linear_sde_inform} is
	\begin{equation}
		l_t^{(\epsilon)} = \nabla F_0^t\!\left(x_0\right)\left(x - x_0\right) + F_0^t\!\left(x_0\right) + \epsilon\nabla F_0^t\!\left(x_0\right)\int_0^t{L\!\left(x_0, \tau\right)\dif W_\tau}.
		\label{eqn:linear_sol}
	\end{equation}
	where the term involving the uncertain initial condition \(x\) and the It\^o integral are independent, and
	\begin{equation}
		L\!\left(x_0, \tau\right) \coloneqq \left[\nabla F_0^\tau(x_0)\right]^{-1}\sigma\left(F_0^\tau(x_0), \tau\right).
		\label{eqn:sigma_L_def}
	\end{equation}

\end{theorem}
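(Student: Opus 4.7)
The plan is to reduce the linearised SDE to a standard inhomogeneous linear SDE about the trajectory $F_0^t(x_0)$ and then apply the stochastic variation of constants method using $\nabla F_0^t(x_0)$ as the fundamental matrix. First I would introduce the shifted process $z_t \coloneqq l_t^{(\epsilon)} - F_0^t(x_0)$. Since $F_0^t(x_0)$ satisfies the deterministic ODE \cref{eqn:ode_det} with derivative $u(F_0^t(x_0),t)$, substituting and cancelling the $u(F_0^t(x_0),t)\,\dif t$ terms reduces \cref{eqn:linear_sde_inform} to the homogeneous-drift linear SDE
\begin{equation*}
    \dif z_t = \nabla u\!\left(F_0^t(x_0), t\right) z_t \, \dif t + \epsilon\,\sigma\!\left(F_0^t(x_0), t\right) \dif W_t, \qquad z_0 = x - x_0.
\end{equation*}

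Next I would identify $\Phi_t \coloneqq \nabla F_0^t(x_0)$ as the fundamental matrix for the deterministic part. Differentiating \cref{eqn:ode_det} in the initial condition (justified by \ref{hyp:fm_exists} and \ref{hyp:coef_cont}) shows that $\Phi_t$ solves the matrix ODE $\dif \Phi_t / \dif t = \nabla u(F_0^t(x_0), t)\, \Phi_t$ with $\Phi_0 = I$. Hypothesis \ref{hyp:fm_exists} guarantees $\Phi_t^{-1}$ exists; because $\Phi_t$ has bounded variation in $t$, differentiating the identity $\Phi_t \Phi_t^{-1} = I$ yields $\dif(\Phi_t^{-1}) = -\Phi_t^{-1} \nabla u(F_0^t(x_0), t)\,\dif t$ with no quadratic variation contribution.

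The key step is then to apply the Itô product rule to $\Phi_t^{-1} z_t$. Since $\Phi_t^{-1}$ has zero quadratic variation, no cross term appears, and the two drift contributions cancel exactly:
\begin{equation*}
    \dif\!\left(\Phi_t^{-1} z_t\right) = \left[-\Phi_t^{-1} \nabla u\!\left(F_0^t(x_0),t\right) + \Phi_t^{-1}\nabla u\!\left(F_0^t(x_0),t\right)\right] z_t \, \dif t + \epsilon \Phi_t^{-1} \sigma\!\left(F_0^t(x_0), t\right) \dif W_t.
\end{equation*}
Integrating from $0$ to $t$ gives $\Phi_t^{-1} z_t = (x-x_0) + \epsilon \int_0^t L(x_0, \tau)\, \dif W_\tau$ with $L$ exactly as defined in \cref{eqn:sigma_L_def}, and left-multiplying by $\Phi_t$ and adding $F_0^t(x_0)$ recovers \cref{eqn:linear_sol}. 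Uniqueness follows because the coefficients of the linear SDE for $z_t$ are bounded and measurable by \ref{hyp:u_bounds}, \ref{hyp:coef_meas}, and \ref{hyp:linear_growth}, so the standard strong existence and uniqueness theorem applies.

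Finally, the independence claim is immediate: by \ref{hyp:init_indep} the random vector $x$ is independent of the entire Wiener path $\{W_s : s \in [0,T]\}$, hence in particular independent of the $\sigma$-algebra generated by the Itô integral $\int_0^t L(x_0, \tau)\,\dif W_\tau$, and the remaining terms $F_0^t(x_0)$ and $\nabla F_0^t(x_0)$ are deterministic. The only step requiring real care is the Itô product rule computation for $\Phi_t^{-1} z_t$; the cancellation is straightforward but one must be explicit that $\Phi_t^{-1}$ is absolutely continuous in $t$, so no Itô correction arises when pairing it against $z_t$.
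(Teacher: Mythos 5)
Your proposal is correct and follows essentially the same route as the paper: the paper applies It\^o's Lemma to $h(l,t) = \tfrac{1}{\epsilon}[\nabla F_0^t(x_0)]^{-1}(l - F_0^t(x_0))$, which is exactly your product-rule computation for $\Phi_t^{-1}z_t$ (up to the harmless $1/\epsilon$ scaling), with the same cancellation of drift terms via the equation of variations $\dot\Phi_t = \nabla u(F_0^t(x_0),t)\Phi_t$ and the same independence argument from \ref{hyp:init_indep}. Your explicit remarks on the uniqueness of the strong solution and on the absence of an It\^o correction term (since $\Phi_t^{-1}$ has bounded variation) are sound and, if anything, slightly more careful than the paper's presentation.
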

\begin{proof}
	See \Cref{app:limit_sol_proof}.
\end{proof}

The representation of the linearised solution as an independent sum in \cref{eqn:linear_sol} can be seen as a decomposition into contributions from the initial uncertainty (the transformation of initial condition \(x\)), a deterministic prediction (the flow map \(F_0^t\!\left(x_0\right)\)) and the ongoing uncertainty in \(u\) (the remaining It\^o integral term).

We can further show that the It\^o integral term follows a Gaussian random variable, which ensures that the independent sum in \cref{eqn:linear_sol} is a convenient expression for both theoretical analysis and numerical computation.
We also provide explicit expressions for the mean and covariance matrix of the linearised solution, written in terms of the deterministic dynamics and \(\sigma\).
\begin{corollary}[Distribution of the linearised solution]\label{cor:limit_moments}
	The It\^o integral term in \cref{eqn:linear_sol} follows a Gaussian distribution independent of \(x\), namely
	\[
		\int_0^t{L\!\left(x_0,\tau\right)\dif W_\tau} \isGauss{0, \int_0^t{L\!\left(x_0, \tau\right)L\!\left(x_0, \tau\right)^{\T}\dif\tau}}.
	\]
	The mean of the linearised solution is
	\begin{equation}
		\avg{l_t^{(\epsilon)}} = F_0^t\!\left(x_0\right) + \nabla F_0^t\!\left(x_0\right) \avg{x - x_0}.
		\label{eqn:mean_expl_eqn}
	\end{equation}
	The \(n\times n\) covariance matrix of the linearised solution is given explicitly by
	\begin{equation}
		\var{l_t^{(\epsilon)}} = \nabla F_0^t\!\left(x_0\right)\left(\var{x} + \epsilon^2 \int_0^t{L\!\left(x_0,\tau\right)L\!\left(x_0,\tau\right)^{\T}\dif\tau}\right)\left[\nabla F_0^t\!\left(x_0\right)\right]^{\T}
		\label{eqn:pi_expl_eqn}
	\end{equation}
	where \(L\!\left(x_0, \tau\right)\) is as defined in \cref{eqn:sigma_L_def} and the integral is taken in the elementwise sense.
\end{corollary}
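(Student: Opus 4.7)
The plan is to deduce all three claims directly from the explicit decomposition of $l_t^{(\epsilon)}$ provided by \Cref{thm:limit_sol}. The key observation is that the matrix $L(x_0,\tau)$ defined in \cref{eqn:sigma_L_def} depends only on the deterministic flow evaluated at the deterministic reference point $x_0$, so it is a deterministic $\R^{n\times m}$-valued function of $\tau$ alone. Consequently the Itô integral appearing in \cref{eqn:linear_sol} is a Wiener integral of a deterministic matrix-valued integrand, which is the object that all three conclusions of the corollary hinge on.

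First I would establish the Gaussianity and covariance of $\int_0^t L(x_0,\tau)\,dW_\tau$. For any fixed vector $a \in \R^n$, the scalar $a^\T \int_0^t L(x_0,\tau)\,dW_\tau = \int_0^t a^\T L(x_0,\tau)\,dW_\tau$ is a Wiener integral of a deterministic row-vector-valued function, hence a zero-mean Gaussian random variable. Since every linear combination of the components is univariate Gaussian, the vector is jointly Gaussian with mean zero, and the Itô isometry identifies its covariance matrix as $\int_0^t L(x_0,\tau) L(x_0,\tau)^\T\,d\tau$. Independence from the random initial condition $x$ has already been asserted within \Cref{thm:limit_sol}, so I would simply invoke it rather than reprove it.

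Given this, the mean formula \cref{eqn:mean_expl_eqn} follows by taking expectations in \cref{eqn:linear_sol}: the deterministic quantities $F_0^t(x_0)$ and $\nabla F_0^t(x_0)\,x_0$ are unchanged, the deterministic matrix $\nabla F_0^t(x_0)$ factors out of the expectation over $x$, and the Itô integral term has mean zero. For the covariance, I would exploit the independence of $x$ and the Itô integral to split the variance across the independent sum in \cref{eqn:linear_sol}, then apply the identity $\var{Az} = A\var{z}A^\T$, valid for any deterministic matrix $A$ and random vector $z$, to each summand. Factoring out the common $\nabla F_0^t(x_0)$ on the left and its transpose on the right collects the two contributions into the bracketed expression of \cref{eqn:pi_expl_eqn}. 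I do not anticipate any serious obstacle: the corollary is essentially a bookkeeping consequence of \Cref{thm:limit_sol}, with the only mildly subtle step being joint Gaussianity of the multivariate Wiener integral, which is classical.
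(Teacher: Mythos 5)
Your proposal is correct and follows essentially the same route as the paper: everything is read off from the independent-sum decomposition in \Cref{thm:limit_sol}, with the It\^o integral of the deterministic integrand \(L(x_0,\cdot)\) identified as a zero-mean Gaussian whose covariance comes from the It\^o isometry, and the mean and covariance formulae then obtained by taking expectations and splitting the variance across the independent summands. The one place you genuinely diverge is the joint-Gaussianity step: you use the Cram\'er--Wold device (every linear combination \(a^{\T}\!\int_0^t L\,\dif W_\tau\) is a scalar Wiener integral of a deterministic integrand, hence Gaussian), whereas the paper's \Cref{lem:det_gauss} argues componentwise, showing each entry of the vector is a sum of independent scalar Gaussians and then asserting multivariate Gaussianity. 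Your version is actually the tighter argument, since Gaussian marginals alone do not imply joint Gaussianity; the componentwise route needs the extra observation that all entries are measurable with respect to the same Gaussian family, which Cram\'er--Wold packages cleanly. Otherwise the two proofs coincide, including the elementwise It\^o-isometry computation of the covariance \(\int_0^t L(x_0,\tau)L(x_0,\tau)^{\T}\dif\tau\) and the use of \(\var{Az}=A\var{z}A^{\T}\) to factor out \(\nabla F_0^t(x_0)\).
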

\begin{proof}
	See \Cref{app:limit_moments_proof}.
	The expressions follow from the representation of the linearised solution as an independent sum in \cref{eqn:linear_sol}.
\end{proof}

In \Cref{thm:limit_sol}, we have provided expressions for the distribution of the solution \(l_t^{(\epsilon)}\) to the linearised SDE \cref{eqn:linear_sde_inform} written solely in terms of the behaviour of the deterministic system \cref{eqn:ode_det}, the specified diffusion matrix \(\sigma\), and the distribution of the initial condition \(x\).
This describes a method for approximating the solution to the nonlinear SDE \cref{eqn:sde_y}, or for characterising the impact of uncertainty in a dynamical system \cref{eqn:ode_det}, that circumvents the need for expensive stochastic simulation.

Thus far, we have stated our results in terms of a general initial condition \(x\), and provided expressions for the linearised solution in terms of this otherwise arbitrary distribution.
However, we later consider two special cases for the initial condition \(x\), a fixed (deterministic) initial condition in \Cref{sec:theory_fixed}, and a Gaussian initial condition in \Cref{sec:theory_gauss}.
In both these cases, the linearised solution also follows a Gaussian distribution which is characterised entirely by the mean and covariance described in \Cref{cor:limit_moments}, allowing for easy computation.
We also relate these results directly to other literature \cite{Jazwinski_2014_StochasticProcessesFiltering,FreidlinWentzell_1998_RandomPerturbationsDynamical,Blagoveshchenskii_1962_DiffusionProcessesDepending,Balasuriya_2020_StochasticSensitivityComputable,Sanz-AlonsoStuart_2017_GaussianApproximationsSmall,SarkkaSolin_2019_AppliedStochasticDifferential} which uses linearisation procedures and Gaussian process approximations for nonlinear SDEs in these situations.

Next, we establish the ordinary differential equation satisfied by the covariance matrix, which is an expression consistent with linearisations schemes described elsewhere \cite{ArchambeauEtAl_2007_GaussianProcessApproximations,SarkkaSolin_2019_AppliedStochasticDifferential,Jazwinski_2014_StochasticProcessesFiltering,Sanz-AlonsoStuart_2017_GaussianApproximationsSmall}.
This ODE enables rapid computation of the mean and covariance of the linearised solutions by solving a system of ODEs, i.e. \cref{eqn:ode_det} and \cref{eqn:pi_ode}.

\begin{remark}\label{rem:cov_ode}
	The \(n\times n\) covariance matrix \(\var{l_t^{(\epsilon)}}\) of the linearised solution is the symmetric positive-semidefinite \(n \times n\) matrix solution to the ordinary differential equation
	\begin{equation}
		\dod{\Pi(t)}{t} = \begin{multlined}[t]
			\nabla u\!\left(F_0^t\!\left(x_0\right), t\right) \Pi(t) + \Pi(t)\left[\nabla u\!\left(F_0^t\!\left(x_0\right), t\right)\right]^{\!\T}\! + \epsilon^2\sigma\!\left(F_0^t\!\left(x_0\right), t\right)\sigma\!\left(F_0^t\!\left(x_0\right), t\right)^{\T}\!,
		\end{multlined}
		\label{eqn:pi_ode}
	\end{equation}
	subject to the initial condition \(\Pi(0) = \var{x}\).
	We show that the variance satisfies \cref{eqn:pi_ode} in \Cref{app:limit_moments_proof}.
\end{remark}

\subsection{Comparison to existing results}
\label{sec:comparison}
In this section we connect our work to the cognate bound derived by \citet{Sanz-AlonsoStuart_2017_GaussianApproximationsSmall}.
That paper considers the following SDE:
\begin{equation}\label{eqn:SAS_sde}
	\dif y_t^{(\epsilon)} = u\!\left(y_t^{(\epsilon)}\right)\dif t + \epsilon \, \tilde{\sigma}\dif W_t,
\end{equation}
where the diffusion coefficient \(\tilde{\sigma}\) is a constant matrix, which is a special case of \cref{eqn:sde_y}.
In this section, we apply our results to \cref{eqn:SAS_sde} to enable both bounds to be compared.
Note that \(\epsilon\) in our article is written as \(\sqrt{\epsilon}\) in \cite{Sanz-AlonsoStuart_2017_GaussianApproximationsSmall}; we will translate results from \cite{Sanz-AlonsoStuart_2017_GaussianApproximationsSmall} to use our notation, so that all results in this article are directly comparable.
In the following, \(c\) denotes an arbitrary finite and non-negative constant that can vary between inequalities.

Theorem 2.2 of \cite{Sanz-AlonsoStuart_2017_GaussianApproximationsSmall}, summarised, is as follows.
Let \(\xi_t^{(\epsilon)}\) be the probability measure associated with \(y_t^{(\epsilon)}\) (as defined in \cref{eqn:SAS_sde}), and let \(\nu_t^{(\epsilon)}\) be the probability measure associated with the corresponding linearisation \(l_t^{(\epsilon)}\) (as defined in \cref{sec:theory}).
Then there exists a constant \(c\) such that the Kullback--Leibler (KL) divergence \(D_{\mathrm{KL}}\) between \(\xi_t^{(\epsilon)}\) and \(\nu_t^{(\epsilon)}\) is bounded;
\begin{equation}
	\label{eqn:SAS}
	D_{\mathrm{KL}}\!\left(\xi_t^{(\epsilon)} \,\middle|\middle|\, \nu_t^{(\epsilon)}\right) \le D_{\mathrm{KL}}\!\left(\xi_0^{(\epsilon)} \,\middle|\middle|\, \nu_0^{(\epsilon)}\right) + c \, \epsilon^2\;.
\end{equation}
To focus on the scaling with \(\epsilon\), assume a fixed initial condition with \(D_{\mathrm{KL}}\!\left(\xi_0^{(\epsilon)} \,\middle|\middle|\, \nu_0^{(\epsilon)}\right) = 0\) (and \(\delta_r = 0\) in our bound \cref{eqn:main_ineq}). Then, employing the Hellinger distance \(D_{\mathrm{H}}\), \cref{eqn:SAS} implies
\begin{align}
	\label{eqn:convert}
	\norm{\avg{ y_t^{(\epsilon)} - l_t^{(\epsilon)}} } \le c D_{\mathrm{H}}\!\left(\xi_t^{(\epsilon)} ,\, \nu_t^{(\epsilon)}\right) \le c \sqrt{D_{\mathrm{KL}}\!\left(\xi_t^{(\epsilon)} \,\middle|\middle|\, \nu_t^{(\epsilon)}\right)} \le c \, \epsilon \;,
\end{align}
while our result \cref{eqn:main_ineq} and Jensen's inequality imply
\[
	\norm{\avg{ y_t^{(\epsilon)} - l_t^{(\epsilon)}} } \le \avg{\norm{y_t^{(\epsilon)} - l_t^{(\epsilon)}}} \leq c \, \epsilon^2\;.
\]
Thus, our bound on the moments is quadratic in \( \epsilon \) rather than linear.
If our conversion in \cref{eqn:convert} was optimal, then our approach in this article
provides a sharper bound on \(\norm{\avg{y_t^{(\epsilon)} - l_t^{(\epsilon)}}}\) that the
results of  \cite{Sanz-AlonsoStuart_2017_GaussianApproximationsSmall} imply, and do so for a
more general $ \sigma $.
The results in  \cite{Sanz-AlonsoStuart_2017_GaussianApproximationsSmall} on the KL divergence would be more natural in information-theoretic contexts, and our hope is that our explicit bound on the moments would be similarly preferred in other contexts.

\subsection{Gaussian initial condition}\label{sec:theory_gauss}
We now briefly consider the case when the initial condition follows a Gaussian distribution, i.e. \(x \isGauss{\mu_0, \Sigma_0}\), where \(\mu_0 \in \R^n\) and \(\Sigma_0 \in \R^{n \times n}\) are fixed and specified.
The linearisation then follows a Gaussian distribution itself, which is entirely characterised by the mean and covariance matrix described in \Cref{cor:limit_moments}.
Alternatively, these moments can be conveniently computed by simultaneously solving \cref{eqn:ode_det} for the state variable and \cref{eqn:pi_ode} for the linearised covariance.

A natural choice of reference point \(x_0\) is the mean of the initial Gaussian density, i.e. \(x_0 = \mu_0\).
The \(L_r\) distance between \(x\) and the mean \(\mu_0\) can be bounded by the trace of \(\Sigma_0\); for example, one such bound is
\begin{equation}\label{eqn:gauss_dist_bound}
	\delta_r^{r} \leq n^{3r/2 - 1} M_r \mathrm{tr}\left(\Sigma_0\right)^{r/2}, \quad M_r \coloneqq \frac{2^{r/2}\Gamma\!\left(\frac{r + 1}{2}\right)}{\sqrt{\pi}},
\end{equation}
where \(\Gamma\) denotes the Gamma function, with equality when \(n = 1\).
The initial covariance \(\Sigma_0\) directly measures the uncertainty in the initial condition, and we see through \cref{eqn:gauss_dist_bound} that as the components of \(\Sigma_0\) approach zero, the contribution of the initial uncertainty to the linearisation error in \cref{eqn:main_ineq} approaches zero also.
The linearised solution is then
\[
	l_t^{(\epsilon)} \isGauss{F_0^t\!\left(x_0\right), \, \nabla F_0^t\!\left(x_0\right) \Sigma_0\left[\nabla F_0^t\!\left(x_0\right)\right]^{\T} + \varepsilon^2 \Sigma_0^t\!\left(x_0\right)},
\]
where \(\Sigma_0^t\!\left(x_0\right)\) is given explicitly by
\begin{equation}\label{eqn:sigma_def}
	\Sigma_0^t\!\left(x_0\right) = \nabla F_0^t\!\left(x_0\right)\left(\int_0^t{L\!\left(x_0, \tau\right)L\!\left(x_0, \tau\right)^{\T} \dif\tau}\right)\left[\nabla F_0^t\!\left(x_0\right)\right]^{\T},
\end{equation}
and is the solution to the matrix differential equation \cref{eqn:pi_ode} in \Cref{rem:cov_ode}, subject to \(\Sigma_0^0\!\left(x_0\right) = O\), the \(n \times n\) zero matrix. 
The covariance matrix \(\Sigma_0^t\!\left(x_0\right)\) characterises the contribution of the ongoing uncertainty in the stochastic system.
The full covariance matrix \(\var{l_t^{(\epsilon)}}\) is also the solution to \cref{eqn:pi_ode} subject to the initial condition \(\Pi(0) = \var{x}\).
By jointly solving \cref{eqn:ode_det} for the deterministic trajectory (the mean of \(l_t^{(\epsilon)}\)) and \cref{eqn:pi_ode} for the covariance matrix, one can easily compute the linearised solution, describing exactly the assumed Gaussian approximation presented in \citet{SarkkaSolin_2019_AppliedStochasticDifferential}, and the dynamics linearisation used in the extended Kalman filter \cite{Jazwinski_2014_StochasticProcessesFiltering}.

\subsection{Fixed initial condition}\label{sec:theory_fixed}
Consider when the initial condition \(x\) is itself a fixed and known deterministic value, in which case we take \(x = x_0\) and \(\delta_r = 0\) for all \(r\).
In this situation, the bound \cref{eqn:main_ineq} on the linearisation error reduces to
\begin{equation}
	\avg{\norm{y_t^{(\epsilon)} - l_t^{(\epsilon)}}^r} \leq \left(K_{\nabla\nabla u}^r + K_{\nabla\sigma}^r\right)D_1\!\left(r,t, K_{\nabla u}, K_\sigma\right)\epsilon^{2r}.
	\label{eqn:main_ineq_fixed}
\end{equation}
We can consider the linearisation as equivalently arising from a first-order power series expansion of \(y_t^{(\epsilon)}\) in the noise-scale parameter \(\epsilon\), i.e.
\[
	y_t^{(\epsilon)} = F_0^t\!\left(x_0\right) + \epsilon z_t^{(\epsilon)} + R_2\left(x,t,\epsilon\right).
\]
where \(z_\epsilon \coloneqq \left(l_{t}^{(\epsilon)} - F_0^t\!\left(x_0\right)\right) / \epsilon\) is the first order term and \(R_2\) is a random quantity capturing the remaining deviation between \(y_t^{(\epsilon)}\) and the linearisation.
By rearranging and taking \(r = 1\) in \cref{eqn:main_ineq_fixed}, we therefore have the explicit Taylor-like bound
\[
	\frac{\avg{\norm{R_2\left(x,t,\epsilon\right)}}}{\epsilon^2} \leq \left(K_{\nabla \nabla u} + K_{\nabla\sigma}\right)D_1\!\left(1,t, K_{\nabla u}, K_\sigma\right),
\]
This result is consistent with the formulation of the linearisation error bounds by \citet{Blagoveshchenskii_1962_DiffusionProcessesDepending} and \citet{FreidlinWentzell_1998_RandomPerturbationsDynamical}, for instance.
Moreover, the distribution of the linearisation solution \cref{eqn:linear_sol} is Gaussian, which through \Cref{cor:limit_moments} we can again explicitly characterise in terms of the deterministic system, namely
\begin{equation}
	l_t^{(\epsilon)} \isGauss{F_0^t\!\left(x_0\right), \epsilon^2 \Sigma_0^t\!\left(x_0\right)},
	\label{eqn:linear_gauss_sol}
\end{equation}
where \(\Sigma_0^t\!\left(x_0\right)\) is defined in \cref{eqn:sigma_def}.
The distribution can be computed \emph{entirely} from the solution behaviour of the deterministic equation \cref{eqn:ode_det} and prior specification of \(\sigma\).
In \Cref{sec:theory_s2}, we demonstrate an application of these results to extend stochastic sensitivity \cite{Balasuriya_2020_StochasticSensitivityComputable} to arbitrary dimension.

\section{Extending stochastic sensitivity}\label{sec:theory_s2}
The results of \Cref{sec:theory_fixed} for a fixed initial condition provide a direct extension of the stochastic sensitivity tools first introduced by \citet{Balasuriya_2020_StochasticSensitivityComputable} for the fluid flow context.
Here, the deterministic model \cref{eqn:ode_det} is seen as a ``best-available'' model for the evolution of Lagrangian trajectories, and the driving vector field \(u\) is the Eulerian velocity of the fluid.
Stochastic sensitivity ascribes a scalar value to each deterministic trajectory by computing a maximum variance of projected deviation \cite{Balasuriya_2020_StochasticSensitivityComputable}.
The aim is to provide a \emph{single} computable number for each deterministic trajectory quantifying the impact of uncertainty in the velocity, independent of the scale (\(\epsilon\)) of the noise.
The natural restating of this original definition of stochastic sensitivity \cite{Balasuriya_2020_StochasticSensitivityComputable} in the $ n $-dimensional setting is as follows:

\begin{definition}[Stochastic sensitivity in \(\R^n\)]\label{def:ss_Rn}
	The \emph{stochastic sensitivity} is a scalar field \(S^2: \R^n \times [0,T] \to \left[0, \infty\right)\) given by
	\begin{equation*}
		S^2\!\left(x_0,t\right) \coloneqq \lim_{\epsilon\downarrow 0}\sup\set{\frac{1}{\epsilon}\var{p^{\T}\left(y_t^{(\epsilon)} - F_0^t\!\left(x_0\right)\right)} \,: \, p \in \R^n, \, \norm{p} = 1}.
	\end{equation*}
\end{definition}

\Cref{def:ss_Rn} is in the spirit of principal components analysis \cite{Jolliffe_2002_PrincipalComponentAnalysis}, performing a dimension reduction by projecting onto the direction in which the variance is maximised, thus capturing the most uncertainty in the data with a scalar value.
The anisotropic uncertainty in two-dimensions \cite{Balasuriya_2020_StochasticSensitivityComputable} is the direction-dependent projection (prior to optimising over all directions in \Cref{def:ss_Rn}).
Explicit theoretical expressions for both the stochastic sensitivity and the anisotropic sensitivity in two dimensions were obtained by \citet{Balasuriya_2020_StochasticSensitivityComputable}; these allowed for quantifying certainty in eventual trajectory locations without having to perform stochastic simulations.
We show here that our results in \(n\)-dimensions are a generalisation of the two-dimensional ones in \cite{Balasuriya_2020_StochasticSensitivityComputable}, which moreover establish Gaussianity as well as an explicit expression for the uncertainty measure.
A theoretically pleasing and computable expression for the stochastic sensitivity is obtainable;

\begin{theorem}[Computation of \(S^2\)]\label{thm:s2_calculation}
	For any \(x_0 \in \R^n\) and \(t \in [0,T]\),
	\begin{equation}
		S^2\!\left(x_0,t\right) = \norm{\Sigma_0^t\!\left(x_0\right)},
		\label{eqn:s2_calculation}
	\end{equation}
	where the covariance matrix \(\Sigma_0^t\) is defined in \cref{eqn:sigma_def}.
	Equivalently, \(S^2\!\left(x_0,t\right)\) is given by the maximum eigenvalue of \(\Sigma_0^t\!\left(x_0\right)\).
\end{theorem}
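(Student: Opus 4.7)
The plan is to leverage the explicit Gaussian law of the linearised solution in the fixed-initial-condition regime, namely \(l_t^{(\epsilon)} \isGauss{F_0^t(x_0), \epsilon^2 \Sigma_0^t(x_0)}\) from \cref{eqn:linear_gauss_sol}, and use \Cref{thm:main} to control the error incurred when \(y_t^{(\epsilon)}\) is replaced by \(l_t^{(\epsilon)}\) inside the projected variance. For every unit vector \(p \in \R^n\) the Gaussian law gives
\[
	\var{p^{\T}\!\left(l_t^{(\epsilon)} - F_0^t(x_0)\right)} = \epsilon^2 p^{\T} \Sigma_0^t(x_0)\, p,
\]
and the Rayleigh--Ritz characterisation identifies the supremum of the right-hand side over unit \(p\) with \(\epsilon^2\) times the largest eigenvalue of \(\Sigma_0^t(x_0)\). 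Since \(\Sigma_0^t(x_0)\) is symmetric positive semi-definite (directly visible from its integral representation in \cref{eqn:sigma_def}), this largest eigenvalue coincides with the spectral norm \(\norm{\Sigma_0^t(x_0)}\).

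The key technical step is to verify that swapping \(y_t^{(\epsilon)}\) for \(l_t^{(\epsilon)}\) inside the variance alters the answer only at strictly subleading order, uniformly in \(p\). I would decompose
\[
	y_t^{(\epsilon)} - F_0^t\!\left(x_0\right) = \left(y_t^{(\epsilon)} - l_t^{(\epsilon)}\right) + \left(l_t^{(\epsilon)} - F_0^t\!\left(x_0\right)\right)
\]
and expand \(\var{p^{\T}(\cdot)}\) via bilinearity. The residual variance \(\var{p^{\T}(y_t^{(\epsilon)} - l_t^{(\epsilon)})}\) is bounded by \(\avg{\norm{y_t^{(\epsilon)} - l_t^{(\epsilon)}}^2}\), which \Cref{thm:main} with \(r = 2\) and \(\delta_{2r} = 0\) estimates as \(O(\epsilon^4)\). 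The cross-covariance term is then \(O(\epsilon^3)\) by a further Cauchy--Schwarz inequality, and both estimates are uniform in \(\norm{p} = 1\) since the bounding constants in \Cref{thm:main} are independent of \(p\).

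Combining, \(\var{p^{\T}\!\left(y_t^{(\epsilon)} - F_0^t(x_0)\right)} = \epsilon^2 p^{\T} \Sigma_0^t(x_0) p + R(\epsilon, p)\) with \(|R(\epsilon, p)| \le C \epsilon^3\) uniformly in \(\norm{p} = 1\). This uniform remainder justifies interchanging the supremum with the small-noise limit in \Cref{def:ss_Rn}, and the leading-order coefficient isolates \(\sup_{\norm{p} = 1} p^{\T} \Sigma_0^t(x_0)\, p = \norm{\Sigma_0^t(x_0)}\), as claimed.

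The main obstacle is the uniform-in-\(p\) estimate that justifies exchanging supremum and limit; once this uniformity is in hand, the rest is routine, resting on Cauchy--Schwarz, the moment bound from \Cref{thm:main}, and the standard spectral characterisation of a symmetric positive semi-definite matrix as its maximum Rayleigh quotient.
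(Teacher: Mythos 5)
Your proposal is correct, and it reaches \cref{eqn:s2_calculation} by a genuinely different route from the paper. You argue direction-by-direction: you expand \(\var{p^{\T}\left(y_t^{(\epsilon)} - F_0^t\!\left(x_0\right)\right)}\) bilinearly about the linearised solution, read off the leading term \(\epsilon^2 p^{\T}\Sigma_0^t\!\left(x_0\right)p\) from the Gaussian law \cref{eqn:linear_gauss_sol}, and control the residual variance and the cross term via \Cref{thm:main} (with \(r=2\) and vanishing \(\delta\)) together with Cauchy--Schwarz, obtaining a remainder that is \(O(\epsilon^3)\) uniformly over \(\norm{p}=1\); that uniformity is precisely what lets you pass the supremum through the small-noise limit and invoke Rayleigh--Ritz. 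The paper instead introduces the semi-norm \(\rho(z) = \norm{\var{z}}^{1/2}\) on random vectors and applies its reverse triangle inequality to compare \(\norm{\var{y_t^{(\epsilon)}}}^{1/2}\) with \(\norm{\var{l_t^{(\epsilon)}}}^{1/2}\) at the level of whole covariance matrices --- the Cauchy--Schwarz work you do by hand on the cross term is absorbed into the verification that \(\rho\) is a semi-norm --- and then separately identifies \(\sup_{\norm{p}=1}p^{\T}\var{y_t^{(\epsilon)}}p\) with \(\norm{\var{y_t^{(\epsilon)}}}\) by a Cholesky factorisation before letting \(\epsilon\downarrow 0\). Both arguments rest on the same two inputs, namely \(\avg{\norm{y_t^{(\epsilon)}-l_t^{(\epsilon)}}^2} = O(\epsilon^4)\) from \Cref{thm:main} with a fixed initial condition and \(\var{l_t^{(\epsilon)}} = \epsilon^2\Sigma_0^t\!\left(x_0\right)\); yours makes the uniformity in \(p\) explicit and is the more elementary presentation, while the paper's semi-norm device is more compact and sidesteps the cross term entirely. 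One small point to make explicit in a full write-up: like the paper's own proof, you must read the \(1/\epsilon\) prefactor in \Cref{def:ss_Rn} as sitting inside the variance (i.e.\ as a \(1/\epsilon^2\) scaling of \(\var{p^{\T}(\cdot)}\)), since otherwise the limit would be trivially zero.
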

\begin{proof}
	See \Cref{app:s2_calculation_proof}.
	This result uses \Cref{thm:main} to establish the convergence of the covariance matrices, and then the properties of the spectral norm to establish \cref{eqn:s2_calculation}.
\end{proof}

Independent of the fluid mechanics context, \Cref{thm:s2_calculation} indicates that even for general systems, the matrix norm of \(\Sigma_0^t(x)\), i.e., the stochastic sensitivity \(S^2(x,t)\), can be used as {\em one} number which encapsulates the uncertainty of an initial state \(x\) after \(t\) time units.
The significance of this result is that the stochastic sensitivity has here been recovered as the maximal eigenvalue of a covariance matrix that is ubiquitous in the literature.
Stochastic sensitivity was formerly defined in \cite{Balasuriya_2020_StochasticSensitivityComputable} as the maximal value of the anisotropic uncertainty of a particular stochastic flow, and the connection to a linearisation was not apparent.

The stochastic sensitivity field can be calculated given any velocity data \(u\), and through the explicit expression \cref{eqn:sigma_def} for \(\Sigma_0^t\) can even be computed from only flow map data.
Computation does not require knowledge of the noise scale \(\epsilon\), so the \(S^2\) field is intrinsic in capturing the impact of the model dynamics on uncertainty, and any specified non-uniform diffusivity.
It has already been shown that, in the fluid flow context, stochastic sensitivity can identify coherent regions in two-dimensions \cite{BadzaEtAl_2023_HowSensitiveAre, Balasuriya_2020_StochasticSensitivityComputable}.
A simple approach is to define robust sets, which are those initial conditions for which the corresponding \(S^2\) value, i.e., the uncertainty in eventual location, are below some specified threshold.
This threshold can be defined precisely in terms of a spatial lengthscale of interest and the advective and diffusive characteristics of the flow, as Definition 2.9 of \cite{Balasuriya_2020_StochasticSensitivityComputable}.
Such a definition extends to the \(n\)-dimensional case as presented here, moreover establishing an easily computable method for determining coherent sets from the covariance matrix \(\Sigma_0^t\).

\section{Numerical validation \& examples}\label{sec:numerics}
This section will validate the theory presented in \Cref{sec:theory,sec:theory_s2}, by considering three example SDEs each leading to a different form of the strong error bound \cref{eqn:main_ineq}.
For each example, we first demonstrate heuristically that the solution converges to the limiting distribution described by \Cref{thm:limit_sol}, and then verify the error bound in \Cref{thm:main} directly by considering a range of values for the noise scale \(\epsilon\) and initial condition uncertainty \(\delta_r\).
We demonstrate numerically the form of the bound on the linearisation error predicted by \Cref{thm:main} is sharp, in the sense that estimates of the error scale exactly with the initial uncertainty \(\delta_r\) and ongoing uncertainty \(\epsilon\) as predicted.

All simulations in this section were generated using the Julia programming language \cite{BezansonEtAl_2017_JuliaFreshApproach}, with the implementations of numerical ODE and SDE solvers provided by the DifferentialEquations.jl package \cite{RackauckasNie_2017_DifferentialEquationsJlPerformant}.
The code is available at \href{https://github.com/liamblake/explicit-characterisation-sde-linearisation}{github.com/liamblake/explicit-characterisation-sde-linearisation}.

\subsection{Nonlinear dynamics, additive noise}\label{sec:numerics_nonlinear}
Consider the following SDE in 1D;
\begin{equation}\label{eqn:sine_sde}
	\dif y_t^{(\epsilon)} = \sin\!\left(y_t^{(\epsilon)}\right)\dif t + \epsilon \dif W_t.
\end{equation}
The deterministic system corresponding to \cref{eqn:sine_sde} has solution
\[
	F_0^t\!\left(x_0\right) = 2\arctan\left(e^{-t}\tan\left(\frac{x_0}{2}\right)\right).
\]
Further details of this example, including computation of the derivatives required in the linearisation, are provided in the supplementary material.

To explore the impact of initial condition uncertainty, we consider the univariate Gaussian initial condition \(y_0 = x \sim \Gauss{\mu, \rho^2}\), where the mean \(\mu\) is specified and the standard deviation \(\rho\) is a non-negative scaling parameter.
We linearise \cref{eqn:sine_sde} about the deterministic trajectory \(F_0^t\!\left(\mu\right)\) originating from the mean, that is, \(\mu\) is the chosen reference point.
This ensures that for any \(r \geq 0\)
\begin{equation}\label{eqn:num_gauss_init}
	\delta_r^r = \avg{\abs{x - \mu}^r} = M_r \rho^r.
\end{equation}
where \(M_r\) is as defined in \cref{eqn:gauss_dist_bound}.
This property of the univariate Gaussian distribution allows us to easily control the uncertainty in the initial condition and verify the bounds; by sending the parameter \(\rho\) to zero we ensure that \(\delta_r\) approaches zero also.

The linearised equation is then
\begin{equation}
	\dif l_t^{(\epsilon)} = \left[F_0^t\!\left(\mu\right) + \cos\!\left(F_0^t\!\left(\mu\right)\right)\left(l_t^{(\epsilon)} - F_0^t\!\left(\mu\right)\right)\right]\dif t + \epsilon \dif W_t, \quad l_0^{(\epsilon)} \isGauss{\mu, \rho^2}.
	\label{eqn:sine_linear}
\end{equation}
and the solution follows a Gaussian distribution, specifically
\begin{equation}\label{eqn:num_linear_sol}
	l_t^{(\epsilon)} \isGauss{F_0^t\!\left(\mu\right), \, \rho^2\nabla F_0^t\!\left(\mu\right)^2 + \epsilon^2\Sigma_0^t\!\left(\mu\right)}.
\end{equation}
where \(\Sigma_0^t\!\left(\mu\right)\) is computed by solving \cref{eqn:pi_ode} numerically subject to a zero initial condition.

In this example, we take \(\mu_0 = 0.5\) and consider the solutions of \cref{eqn:sine_sde} and \cref{eqn:sine_linear} at time \(t = 1.5\).
We generate accurate samples of \cref{eqn:sine_sde} and \cref{eqn:sine_linear} jointly (i.e. using the same numerical realisations of the Wiener process \(W_t\)) using the stochastic Runge-Kutta scheme SRI \cite{Rossler_2010_RungeKuttaMethodsStrong} with an adaptive step size \cite{RackauckasNie_2017_AdaptiveMethodsStochastic}.

\begin{figure}
	\begin{center}
		\includegraphics[width=\textwidth]{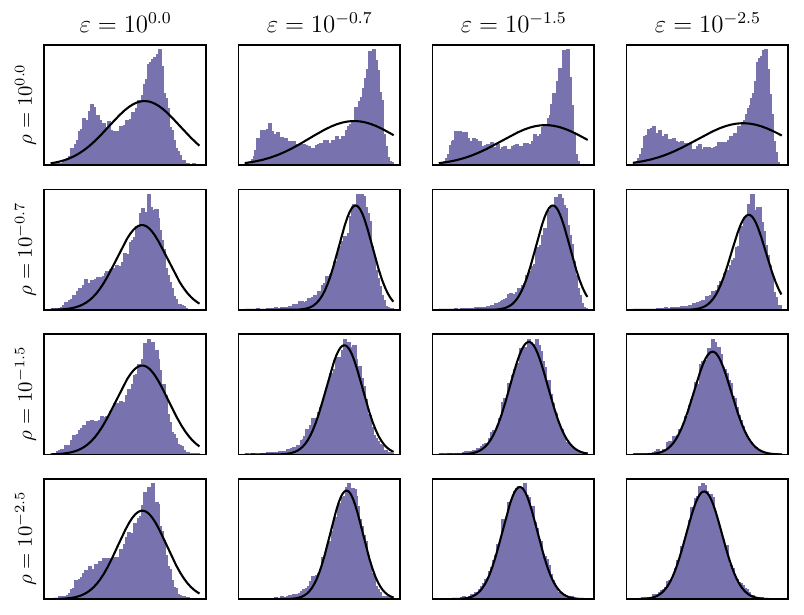}
		\caption{Histograms of stochastic samples of \cref{eqn:sine_sde}, subject to the Gaussian initial condition \cref{eqn:num_gauss_init}, for varying initial uncertainty scale \(\rho\) and ongoing uncertainty scale \(\epsilon\).
			The distribution of the corresponding solution \cref{eqn:num_linear_sol} to the linearised equation is overlaid in black.}
		\label{fig:sine_hists}
	\end{center}
\end{figure}

In \Cref{fig:sine_hists}, we show histograms of \(N = 10000\) samples of the solution to nonlinear SDE \cref{eqn:sine_sde} and the corresponding probability density function of the linearised solution \cref{eqn:num_linear_sol}, for different combinations of \(\epsilon\) and \(\rho\).
Even when the ongoing noise is small, the nonlinearity of the drift term means that a large initial uncertainty results in a non-Gaussian distribution.
However, in situations where both the initial and ongoing uncertainties are small, the Gaussian solution to the linearised equation provides a reasonable approximation. 
In the limit of both small initial (\(\rho \to 0\)) and small ongoing (\(\epsilon \to 0\)) uncertainty (towards the bottom right), we see that the distribution of the samples approach the Gaussian density of the linearisation solution, matching the understanding that the linearisation approximation is ``reasonable'' for small noise regimes.

Since the drift term is nonlinear and the noise is additive in \cref{eqn:sine_sde}, the bound predicted by \Cref{thm:main} has the form
\[
	\avg{\norm{y_t^{(\epsilon)} - l_t^{(\epsilon)}}^r} \leq D_1\!\left(r,t, K_{\nabla u}, K_\sigma\right)\epsilon^{2r} + M_{2r}D_2\!\left(r,t, K_{\nabla u}\right)\rho^{2r}.
\]
where we have taken \(K_{\nabla\nabla u} = 1\) and \(K_{\nabla\sigma} = 0\).
To numerically validate this bound under the Gaussian initial condition \cref{eqn:num_gauss_init}, define for \(r \geq 1\) the error measure
\begin{equation}
	E_r\!\left(\epsilon, \rho\right) \coloneqq \frac{1}{N}\sum_{i=1}^N{\norm{\hat{y}_{i}^{(\epsilon)} - \hat{l}_i^{(\epsilon)}}^r},
	\label{eqn:strong_err_mc_estimate}
\end{equation}
which is a Monte-Carlo estimator of the right-hand side of \cref{eqn:main_ineq}, where \(\hat{y}_1^{(\epsilon)},\dotsc, \hat{y}_N^{(\epsilon)}\) and \(\hat{l}_1^{(\epsilon)},\dotsc, \hat{l}_N^{(\epsilon)}\) are \(N\) numerical samples of the solutions to SDE \cref{eqn:sde_y} and the linearisation \cref{eqn:linear_sde_inform} respectively.

We directly validate the \emph{form} of the error bound (as a function of \(\epsilon\) and \(\rho\)) in \Cref{fig:sine_delta_eps_lines}, by computing \(E_1\) using samples for each pair of \(\epsilon\) and \(\rho\) values.
In \Cref{fig:sine_eps_lines}, we demonstrate the relationship between \(E_1\) and the ongoing uncertainty \(\epsilon\) for several different fixed values of \(\rho\), each corresponding to a different colour.
A least squares estimate of a line of best fit of the form \(E_1 = \beta_0 + \beta_1 \epsilon^2 \), for fixed coefficients \(\beta_0\) and \(\beta_1\), is fitted to the observed errors (in untransformed space) to verify the scaling of our bound in \Cref{thm:main}.
We see that the line of best fit accurately matches the observed values of \(E_1\), verifying that \(E_1\) is in fact scaling with \(\epsilon^2\) as predicted.
\Cref{fig:sine_delta_lines} provides a similar demonstration between \(E_1\) and the initial uncertainty \(\rho\), where now each colour corresponds to a different fixed value of \(\epsilon\).
We again fit lines of the form \(E_1 = \beta_0 + \beta_1 \rho^2\) to verify the scaling of the bound, and see that the lines match the observed values of \(E_2\).
Thus, we have also validated that \(E_1\) scales with \(\rho^2\), as expected from \Cref{thm:main}.

\begin{figure}
	\begin{center}
		\begin{subfigure}{\textwidth}
			\includegraphics[width=\textwidth]{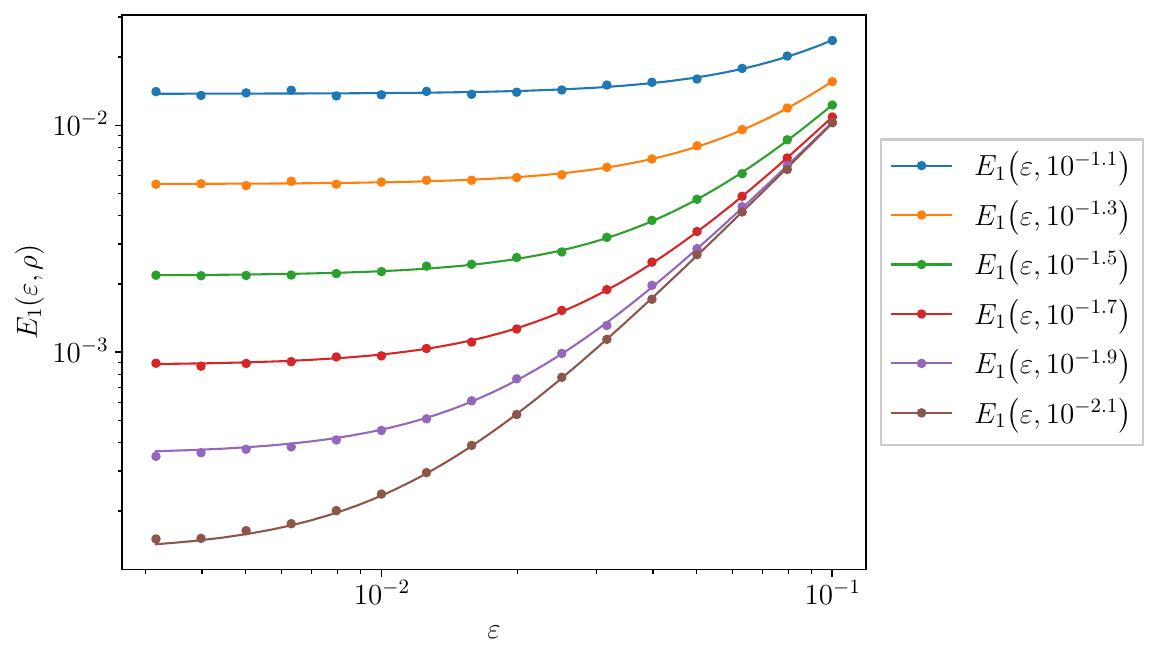}
			\caption{Estimates of the strong error (with \(r = 1\)) in linearising \cref{eqn:sine_sde} with \cref{eqn:sine_linear}, for varying ongoing uncertainty parameter \(\epsilon\).
				Each colour corresponds to a different value of the initial uncertainty parameter \(\rho\).
				A (least squares) line of best fit of the form \(\beta_0 + \beta_1 \epsilon^2\) is included in the corresponding colour.}
			\label{fig:sine_eps_lines}
		\end{subfigure}
		\begin{subfigure}{\textwidth}
			\includegraphics[width=\textwidth]{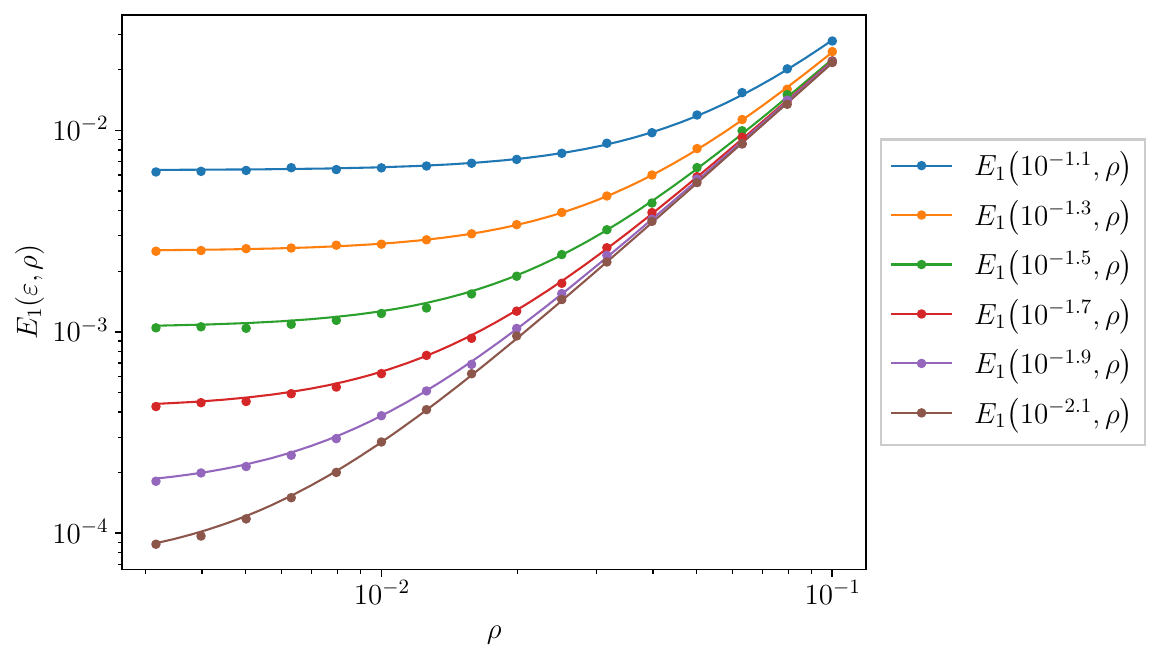}
			\caption{Estimates of the strong error (with \(r = 1\)) for varying initial uncertainty parameter \(\rho\).
				Each colour corresponds to a different value of the ongoing uncertainty parameter \(\epsilon\).
				A (least squares) line of best fit of the form \(\beta_0 + \beta_1 \rho^2\) is included in the corresponding colour.}
			\label{fig:sine_delta_lines}
		\end{subfigure}
		\caption{Validation of the theoretical bound predicted by \Cref{thm:main}, when \(r = 1\), on numerical realisations of the solution to the 1D example \cref{eqn:sine_sde}.}
		\label{fig:sine_delta_eps_lines}
	\end{center}
\end{figure}

\subsection{Linear dynamics, multiplicative noise}\label{sec:numerics_multiplicative}
Now consider the following SDE with multiplicative noise in 1D;
\begin{equation}
	\dif y_t^{(\epsilon)} = \frac12 y_t^{(\epsilon)}\dif t + \varepsilon \cos\!\left(y_t^{(\epsilon)}\right) \dif W_t.
	\label{eqn:1d_mult}
\end{equation}
The corresponding deterministic system is linear and has solution
\begin{equation}
	F_0^t\!\left(x_0\right) = \exp\!\left(\frac{t}{2}\right) x_0,
	\label{eqn:1d_mult_det_sol}
\end{equation}
with additional details provided in the supplementary materials.
As with the previous example in \Cref{sec:numerics_nonlinear}, we take the Gaussian initial condition \cref{eqn:num_gauss_init} with variance \(\rho^2\) and linearised \cref{eqn:1d_mult} about the initial mean \(\mu\).
The linearised equation is then
\begin{equation}
	\dif l_t^{(\epsilon)} = \frac12 l_t^{(\epsilon)}\dif t + \epsilon \cos\!\left(\exp\left(\frac{t}{2}\right)\mu\right) \dif W_t, \quad l_0^{(\epsilon)} \sim \Gauss{\mu, \rho^2},
	\label{eqn:1d_mult_linear}
\end{equation}
with Gaussian solution \cref{eqn:num_linear_sol}.
We take the initial point \(\mu = 2\) and consider the solutions at time \(t = 1\).
To generate numerical realisations of the solutions to \cref{eqn:1d_mult} and \cref{eqn:1d_mult_linear} with the same realisations of \(W_t\), we use the same set-up as in the previous example.

In \Cref{fig:sine_hists}, we show histograms of \(N = 10000\) samples of the multiplicative noise SDE \cref{eqn:1d_mult} and the corresponding probability density function of the linearised solution, for different combinations of \(\epsilon\) and \(\rho\).
We again see that in the limit of both small initial and small ongoing uncertainty (towards the bottom right), we see that the distribution of the samples approach the Gaussian density of the linearisation solution.

\begin{figure}
	\begin{center}
		\includegraphics[width=\textwidth]{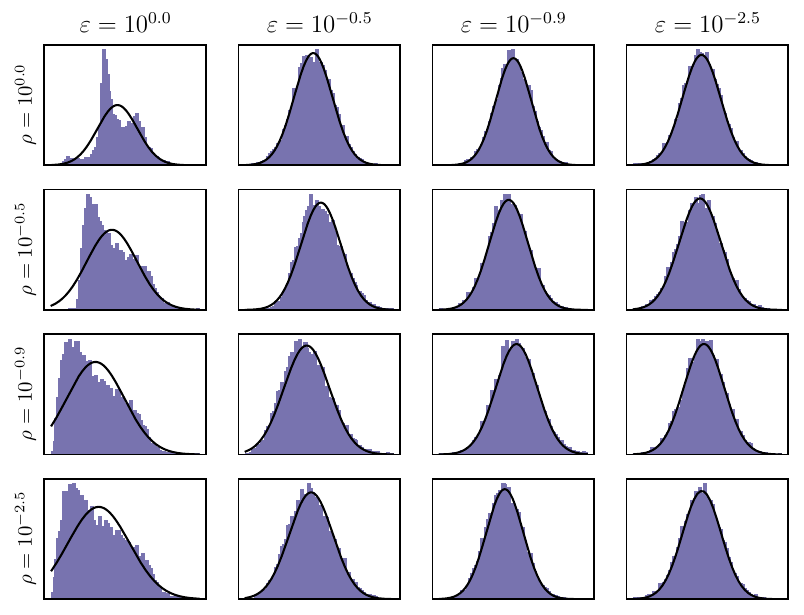}
		\caption{The same arrangement as \Cref{fig:sine_hists}, but for the 1D multiplicative noise SDE \cref{eqn:1d_mult}.}
		\label{fig:1d_mult_hists}
	\end{center}
\end{figure}

Since the drift term is linear and the noise multiplicative in \cref{eqn:1d_mult}, the bound predicted by \Cref{thm:main} has the form
\[
	\avg{\norm{y_t^{(\epsilon)} - l_t^{(\epsilon)}}^r} \leq D_1\!\left(r,t, K_{\nabla u}, K_\sigma\right)\epsilon^{2r} + M_{r}D_3\!\left(r,t, K_{\nabla u}\right)\epsilon^r\rho^{r},
\]
where we have \(K_{\nabla\nabla u} = 0\) and \(K_{\nabla\sigma} = 1\).
In \Cref{fig:multiplicative_delta_eps_lines}, we again validate the form of this bound (for \(r = 1\); results for additional values of \(r\) are provided in the supplementary material) by approximating the left-hand side with \(E_1\) computed from realisations of the solution to \cref{eqn:1d_mult} and the linearisation \cref{eqn:1d_mult_linear}.
For each fixed value of the initial uncertainty \(\rho\), in \Cref{fig:multiplicative_eps_lines}, we fit a line of best fit of the form \(\beta_1 \epsilon + \beta_2 \epsilon^2\) to validate that the strong error scales as predicted.
Similarly, in \Cref{fig:multiplicative_eps_lines} we fit a line of best fit of the form \(\beta_0 + \beta_1 \rho\) and confirm that the linearisation error follows this scaling.

\begin{figure}
	\begin{center}
		\begin{subfigure}{\textwidth}
			\includegraphics[width=\textwidth]{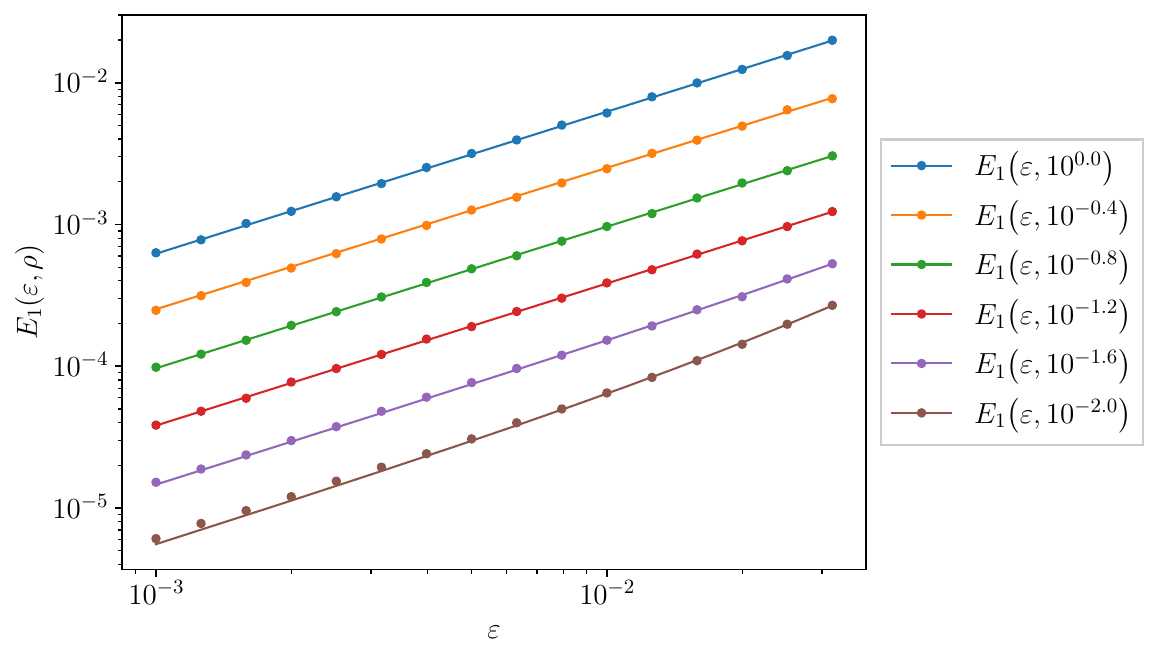}
			\caption{Estimates of the strong order (with \(r = 1\)) for varying ongoing uncertainty parameter \(\epsilon\).
				Each colour corresponds to a different value of the initial uncertainty parameter \(\rho\).
				A (least squares) line of best fit of the form \(\beta_1 \epsilon + \beta_2 \epsilon^2\) is included in the corresponding colour.}
			\label{fig:multiplicative_eps_lines}
		\end{subfigure}
		\begin{subfigure}{\textwidth}
			\includegraphics[width=\textwidth]{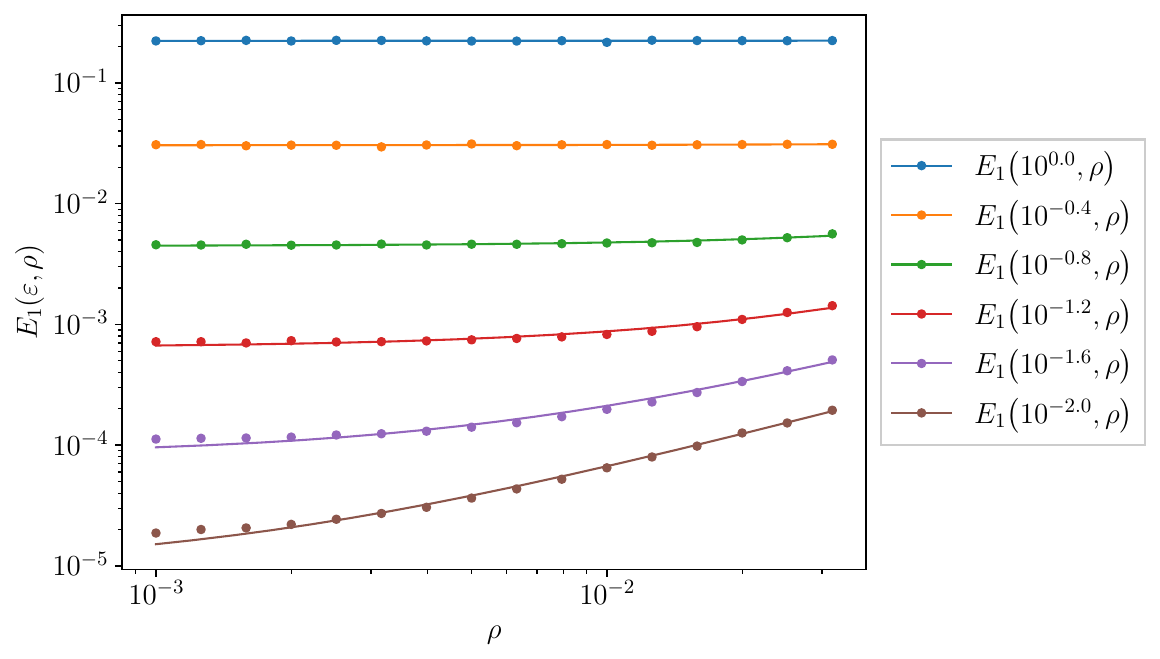}
			\caption{Estimates of the strong order (with \(r = 1\)) for varying initial uncertainty parameter \(\rho\).
				Each colour corresponds to a different value of the ongoing uncertainty parameter \(\epsilon\).
				A (least squares) line of best fit of the form \(\beta_0 + \beta_1 \rho\) is included in the corresponding colour.}
			\label{fig:multiplicative_delta_lines}
		\end{subfigure}
		\caption{Validation of the theoretical bound predicted by \Cref{thm:main}, when \(r = 1\), on numerical realisations of the solution to the 1D example \cref{eqn:1d_mult}.}
		\label{fig:multiplicative_delta_eps_lines}
	\end{center}
\end{figure}

\subsection{Fixed initial condition}\label{sec:numerics_2d}
In this example, we consider a two-dimensional model and a fixed initial condition, to validate the results presented in \Cref{sec:theory_fixed}.
Following the example in Chapter 5 of \cite{SamelsonWiggins_2006_LagrangianTransportGeophysical}, we consider an unsteady meandering jet in two dimensions, which may serve as an idealised model of geophysical Rossby waves \cite{Pierrehumbert_1991_ChaoticMixingTracer}.
The velocity field for \(y \equiv \left(y_1, y_2\right)^{\T}\) is given by
\begin{equation}
	u\!\left(y, t\right) = \begin{bmatrix}
		c - A\sin\!\left(Ky_1\right)\cos\!\left(y_2\right) + \oldepsilon_{\mathrm{mj}} l_1\sin\!\left(k_1\left(y_1 - c_1 t\right)\right)\cos\!\left(l_1 y_2\right) \\
		AK\cos\!\left(Ky_1\right)\sin\!\left(y_2\right) + \oldepsilon_{\mathrm{mj}} k_1\cos\!\left(k_1\left(y_1 - c_1t\right)\right)\sin\!\left(l_1 y_2\right)
	\end{bmatrix}.
	\label{eqn:jet_ex}
\end{equation}
The velocity field describes a kinematic travelling wave with deterministic oscillatory perturbations in a co-moving frame.
Here, \(A\) is the amplitude and \(c\) is the phase speed of the primary wave, and \(K\) is the wavenumber in the \(y_1\)-direction.
The oscillatory perturbation has amplitude \(\oldepsilon_{\mathrm{mj}}\), phase speed \(c_1\) (in the co-moving frame), and wavenumbers \(k_1\) and \(l_1\) in the \(y_1\)- and \(y_2\)-directions respectively.
Throughout, we take the parameter values \(c = 0.5\), \(A = 1\), \(K = 4\), \(l_1 = 2\), \(k_1 = 1\), \(c_1 = \pi\), and \(\oldepsilon_{\mathrm{mj}} = 0.3\).
For these values, the flow consists of a meandering jet with vortex structures within the meanders, and a chaotic zone which influences the fluid transfer between the jet and the vortices.

We introduce multiplicative noise by considering stochastic perturbations to the phase speed \(c\) and the primary amplitude \(A\), which we model with the respective components of a \(2\)-dimensional Wiener process \(W_t = \left(W_t^{(1)}, W_t^{(2)}\right)^{\T}\).
Then, we specify the diffusion term as
\begin{equation}
	\sigma\!\left(y,t\right) = \begin{bmatrix}
		1 & \sin\!\left(Ky_1\right)\cos\!\left(y_2\right)  \\
		0 & K\cos\!\left(Ky_1\right)\sin\!\left(y_2\right)
	\end{bmatrix}.
	\label{eqn:jet_ex_sigma}
\end{equation}

We consider the fixed initial condition \(x_0 = \left(0, 1\right)\) and the prediction of the model at time \(t = 1\).
We then consider a linearisation of the SDE about the deterministic trajectory \(F_0^t\!\left(x_0\right)\), where \(F_0^t\) is the deterministic flow map corresponding to the vector field \cref{eqn:jet_ex}.
To compute the Gaussian distribution \cref{eqn:linear_gauss_sol} of the linearised solution, we again solve \cref{eqn:pi_ode} numerically with initial condition \(\Sigma_0^t\!\left(x_0\right) = O\).
Specifically, \cref{eqn:pi_ode} is solved jointly with the deterministic state equation \cref{eqn:ode_det} using the hybrid method proposed by \citet{Mazzoni_2008_ComputationalAspectsContinuous}.
This hybrid method combines a Taylor-Heun approximation with a Gauss-Legendre one and ensures that the numerical solution of the covariance equation is symmetric and positive semi-definite while maintaining both accuracy and computational efficiency.

\Cref{fig:y_hists} shows the resulting simulations of \(y_t^{(\epsilon)}\) for four different values of \(\epsilon\).
The realisations are binned as a histogram and bin counts are normalised, to provide an empirical estimate of the probability density function of \(y_t^{(\epsilon)}\).
Superimposed (in solid black) are the first, second and third standard-deviation contours of the probability density function of the Gaussian distribution that solves the linearised equation.
The first three standard-deviation levels of the \(2\times 2\) sample covariance matrix of the realisations of \(y_t^{(\epsilon)}\), are also overlaid (in dashed blue).
As \(\epsilon\) decreases towards \(0\), the samples increasingly resemble a Gaussian distribution, and both the mean and covariance coincide with the corresponding limits.

\begin{figure}
	\begin{center}
		\begin{subfigure}{0.49\textwidth}
			\includegraphics[width=\textwidth]{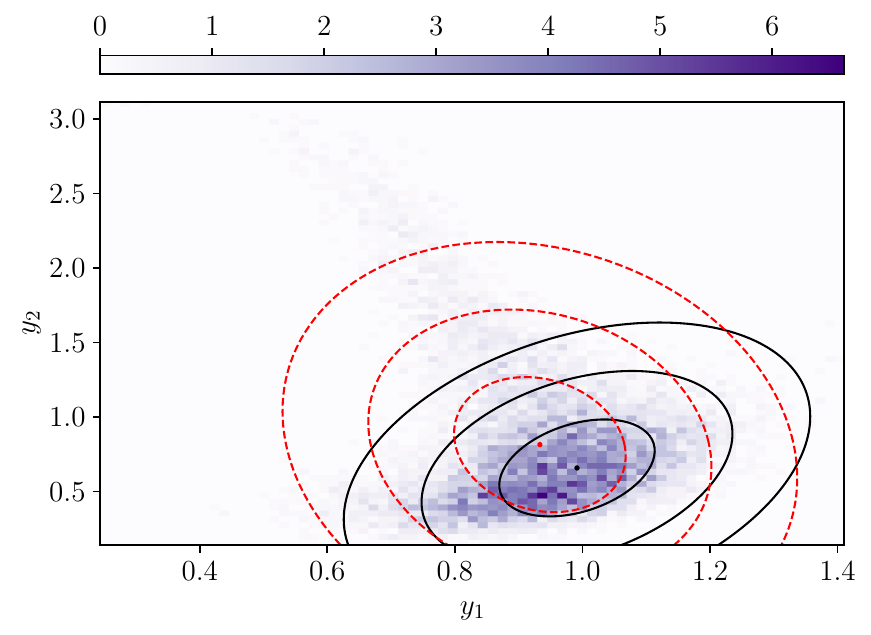}
			\caption{\(\epsilon = 10^{-1}\)}
			\label{fig:y_hists_a}
		\end{subfigure}
		\begin{subfigure}{0.49\textwidth}
			\includegraphics[width=\textwidth]{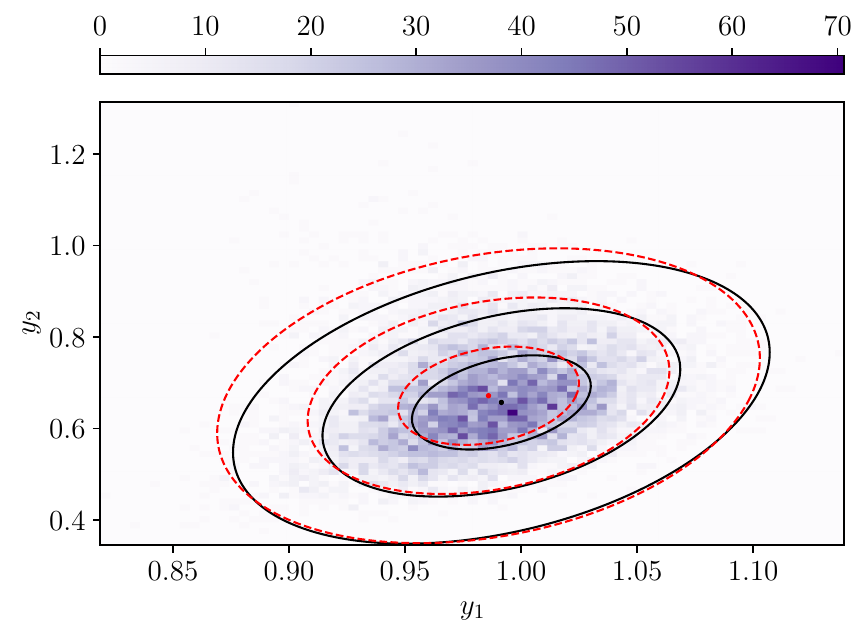}
			\caption{\(\epsilon = 10^{-1.5}\)}
			\label{fig:y_hists_b}
		\end{subfigure}
		\begin{subfigure}{0.49\textwidth}
			\includegraphics[width=\textwidth]{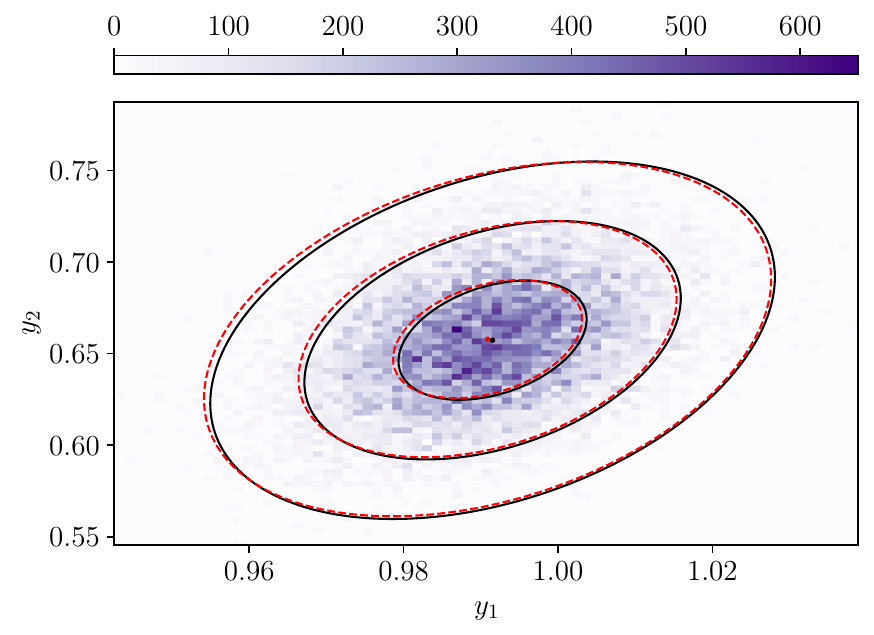}
			\caption{\(\epsilon = 10^{-2}\)}
			\label{fig:y_hists_c}
		\end{subfigure}
		\begin{subfigure}{0.49\textwidth}
			\includegraphics[width=\textwidth]{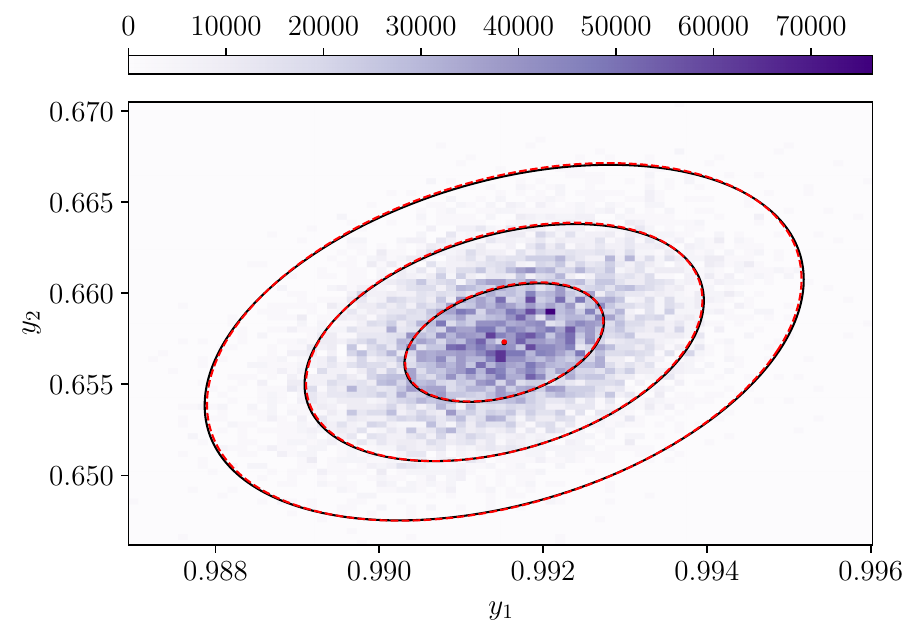}
			\caption{\(\epsilon = 10^{-3}\)}
			\label{fig:y_hists_d}
		\end{subfigure}
		\caption{Histograms of \(y_t^{(\epsilon)}\) from direct simulation of the SDE with drift \cref{eqn:jet_ex} and diffusivity \cref{eqn:jet_ex_sigma} subject to the fixed initial condition, for four different \(\epsilon\) values.
		Overlaid in black are contours of the Gaussian solution \cref{eqn:linear_gauss_sol} of the linearised SDE \cref{eqn:linear_sde_inform}, which correspond to the first three standard deviation levels centred at the mean \(F_0^t(x)\).
		In dashed blue are corresponding contours computed from the sample covariance matrix of the realisations.
		}
		\label{fig:y_hists}
	\end{center}
\end{figure}

For a fixed initial condition, \cref{eqn:main_ineq} predicts that the expected distance between the original SDE solution and that of a linearisation satisfies
\[
	\avg{\norm{y_t^{(\epsilon)} - l_t^{(\epsilon)}}^r} \leq \left(K_{\nabla\nabla u} + K_{\nabla\sigma}\right)D_1\!\left(r,t, K_{\nabla u}, K_\sigma\right)\epsilon^{2r}.
\]
To numerically estimate the left-hand side of \cref{eqn:main_ineq}, we again use a Monte-Carlo estimator;
\[
	E_r\!\left(\epsilon\right) \coloneqq \frac{1}{N}\sum_{i=1}^N{\norm{\hat{y}_i^{(\epsilon)} - \hat{l}_i^{(\epsilon)}}^r}.
\]
For \(r = 1,2,3,4\), \(E_r\!\left(\epsilon\right)\) is shown (in a logarithmic scale) for decreasing values of \(\epsilon\) in \Cref{fig:gamma_z_valid}.
\Cref{thm:main} predicts that \(\log_{10}\left(E_r\!\left(\epsilon\right)\right)\) should decay linearly with a slope greater than \(2r\) as \(\epsilon\) decreases to zero.
The least squares lines of best fit for each value of \(r\) in \Cref{fig:gamma_z_valid} show this behaviour, and are therefore consistent with \Cref{thm:main}.

\begin{figure}
	\begin{center}
		\begin{subfigure}{0.49\textwidth}
			\includegraphics[width=\textwidth]{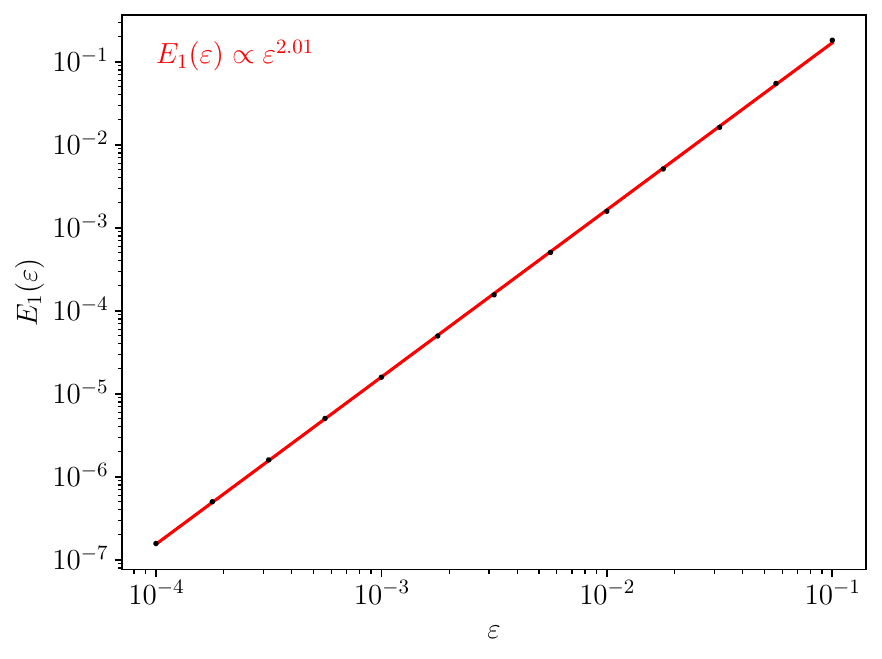}
			\caption{\(r = 1\) (mean)}
			\label{fig:gamma_z_valid_1}
		\end{subfigure}
		\begin{subfigure}{0.49\textwidth}
			\includegraphics[width=\textwidth]{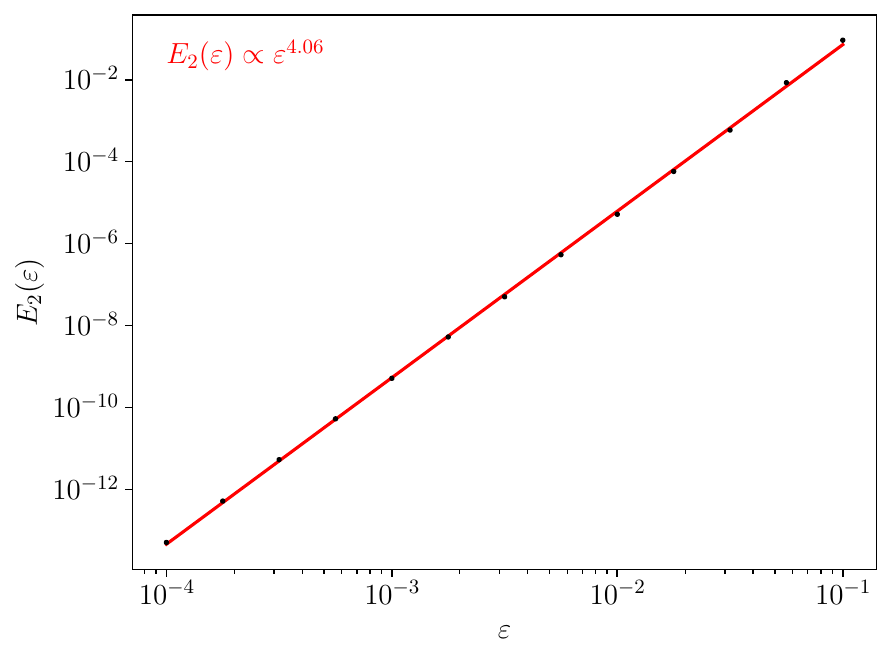}
			\caption{\(r = 2\) (variance)}
			\label{fig:gamma_z_valid_2}
		\end{subfigure}
		\begin{subfigure}{0.49\textwidth}
			\includegraphics[width=\textwidth]{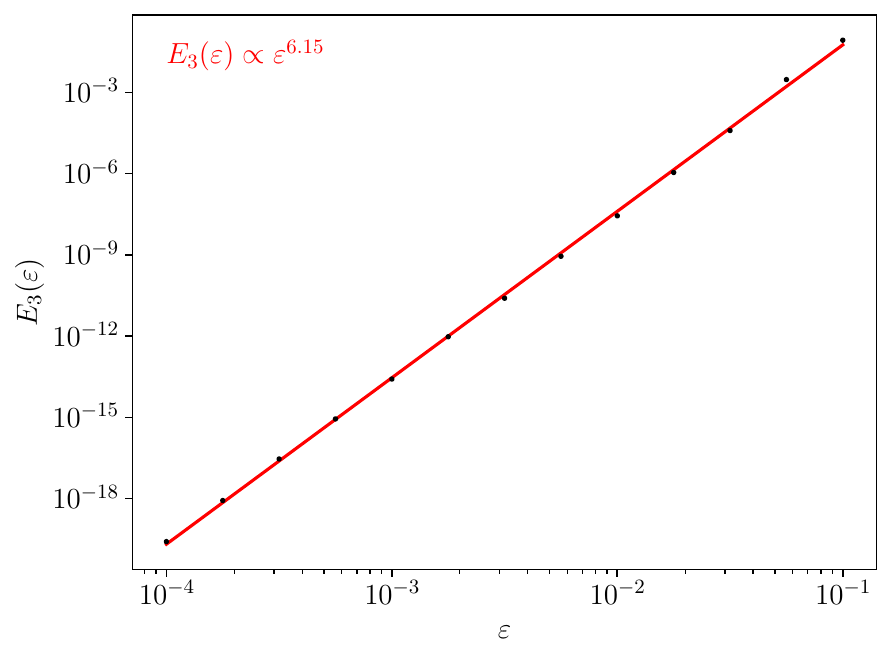}
			\caption{\(r = 3\) (skewness)}
			\label{fig:gamma_z_valid_3}
		\end{subfigure}
		\begin{subfigure}{0.49\textwidth}
			\includegraphics[width=\textwidth]{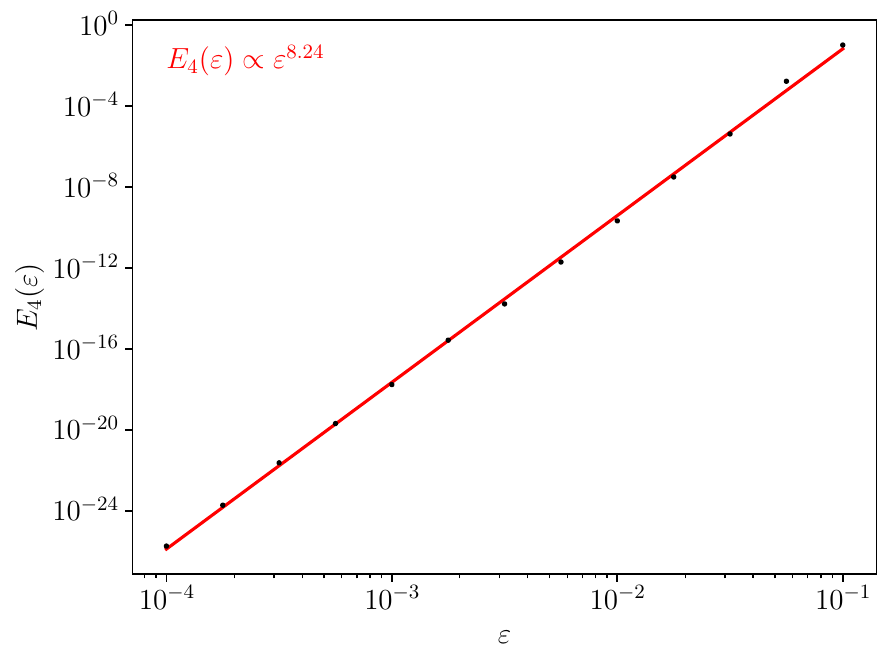}
			\caption{\(r = 4\) (kurtosis)}
			\label{fig:gamma_z_valid_4}
		\end{subfigure}

		\caption{Validation of \Cref{thm:main}, by plotting the sample \(r\)th raw moment distance (the error metric \(E_r(\epsilon)\)) between \(10000\) realisations of the meandering jet SDE and a corresponding linearisation, for decreasing values of \(\epsilon\).
			A line of best fit (in red) is placed on each, and the resulting slope indicated.}
		\label{fig:gamma_z_valid}
	\end{center}
\end{figure}

\subsection{Computing stochastic sensitivity} \label{sec:comput_s2}
In this section, we illustrate the computability of stochastic sensitivity as 
described in \Cref{thm:s2_calculation}.
We again consider the meandering jet \cref{eqn:jet_ex} with multiplicative noise specified by \cref{eqn:jet_ex_sigma}.
We take the same choice of parameters as in \cref{sec:numerics_2d}, except for the perturbation amplitude \(\oldepsilon_{\mathrm{mj}}\) which is varied to obtain qualitatively different behaviour in the system.
For each initial condition in a \(400 \times 400\) uniform grid on \(\left[0, \pi\right] \times \left[0, \pi\right]\), the \(S^2\) value is calculated using \cref{eqn:s2_calculation}.
\Cref{fig:s2_field} shows the resulting \(S^2\) field from time \(0\) to \(t = 1\), for two different values of \(\oldepsilon_{\mathrm{mj}}\).
The novel aspect of this section is that stochastic sensitivity is demonstrated to be computable with a straightforward spectral norm (or equivalently eigenvalue) calculation, rather than the computation involving three integrals originally presented in \cite{Balasuriya_2020_StochasticSensitivityComputable}.
With the additional extension of stochastic sensitivity to arbitrary dimensions, this computation enables a fast quantification of uncertainty in a dynamical system.

\begin{figure}
	\begin{center}
		\begin{subfigure}{0.49\textwidth}
			\includegraphics[width=\textwidth]{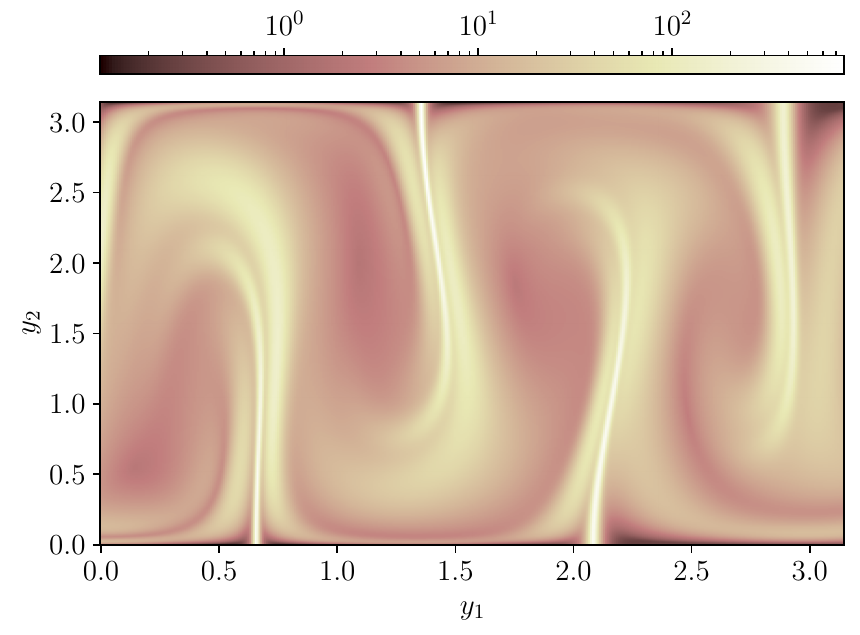}
			\caption{\(\oldepsilon_{\mathrm{mj}} = 0.3\)}
			\label{fig:s2_field_0.3}
		\end{subfigure}
		\begin{subfigure}{0.49\textwidth}
			\includegraphics[width=\textwidth]{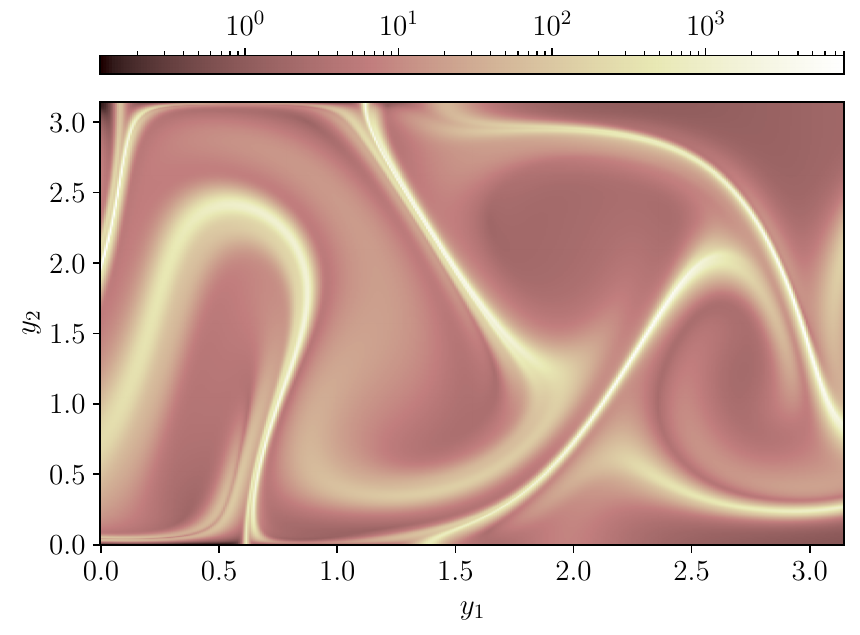}
			\caption{\(\oldepsilon_{\mathrm{mj}} = 1.0\)}
			\label{fig:s2_field_1.0}
		\end{subfigure}
		\caption{The \(S^2\) field of the meandering jet flow \cref{eqn:jet_ex} over the time interval \([0,1]\), for two different sets of parameters with qualitatively different behaviour.
			The \(S^2\) value for each initial condition is computed directly as the operator norm of the covariance matrix \(\Sigma_0^1\!\left(x_0\right)\), as per \cref{eqn:s2_calculation}.}
		\label{fig:s2_field}
	\end{center}
\end{figure}

The stochastic sensitivity field can highlight Lagrangian coherent structures within the flow, by identifying regions of the flow with a relatively small uncertainty, as measured by a \emph{single} number for each initial condition.
Subsets of the spatial domain corresponding to coherent regions can be extracted by taking a threshold on \(S^2\) as described in the original work \cite{Balasuriya_2020_StochasticSensitivityComputable}; examples of coherent structure extraction with stochastic sensitivity on both toy models and real data can be found in \cite{Balasuriya_2020_StochasticSensitivityComputable} and \cite{BadzaEtAl_2023_HowSensitiveAre}.

\section{Conclusions}\label{sec:discussion}
This paper has provided an {explicit} bound on the error between the solution of a nonlinear stochastic differential equation and an easily computable linearisation approximation that has been used across many different applications \cite[e.g.]{Jazwinski_2014_StochasticProcessesFiltering, Sanz-AlonsoStuart_2017_GaussianApproximationsSmall,KaszasHaller_2020_UniversalUpperEstimate,ArchambeauEtAl_2007_GaussianProcessApproximations}, building upon previous studies \cite{Blagoveshchenskii_1962_DiffusionProcessesDepending,FreidlinWentzell_1998_RandomPerturbationsDynamical,Sanz-AlonsoStuart_2017_GaussianApproximationsSmall}.
The theory applies to fully non-autonomous SDEs with multiplicative noise and a random initial condition.
Our bound is written in terms of the scale of the ongoing noise (that is, a scaling of the diffusivity coefficient) and a measure of the uncertainty in the initial condition using the \(L_r\)-norm.
A comparison in \cref{sec:comparison} suggests that our bound on the moments is tighter than implied by the gold standard \cite{Sanz-AlonsoStuart_2017_GaussianApproximationsSmall} in the literature.

We also provided an explicit characteristation of the distribution of the solution to the linearised SDE, enabling efficient approximation of the original nonlinear SDE using solutions to the corresponding deterministic equation, whereas previously special cases of these computations were dispersed across other literature \cite{Jazwinski_2014_StochasticProcessesFiltering,Sanz-AlonsoStuart_2017_GaussianApproximationsSmall,SarkkaSolin_2019_AppliedStochasticDifferential}.
Finally, we validated the form of the bound numerically on three example stochastic differential equations, and in particular find that the strong error scales with the initial uncertainty and ongoing uncertainty exactly as predicted.

Our error bound may be useful to investigate the validity of SDE linearisations in application domains such as Stochastic Parameterisation \cite{BernerEtAl_2017_StochasticParameterizationNew,Palmer_2019_StochasticWeatherClimate,LeutbecherEtAl_2017_StochasticRepresentationsModel} or Data Assimilation \cite{BudhirajaEtAl_2019_AssimilatingDataModels,ReichCotter_2015_ProbabilisticForecastingBayesian,LawEtAl_2015_DataAssimilationMathematical,Harlim_2017_ModelErrorData}, or to improve such linearisations by monitoring the skew or kurtosis.

This paper also supplied theoretical and computational extension to the ``stochastic sensitivity'' tools introduced by \citet{Balasuriya_2020_StochasticSensitivityComputable}.
Stochastic sensitivity was hitherto derived as the variance of an unknown limiting distribution and could only be computed in two spatial dimensions; we established that stochastic sensitivity, in any number of dimensions, is computable as the operator norm of the covariance matrix of the linearised SDE.
We have also established that the limiting distribution in question is Gaussian, which may provide insight into properties of stochastic sensitivity as a means of uncertainty quantification in any model (not just in the fluids context) where an $ n $-dimensional state variable evolves according to a ``best available" model.

Our work on stochastic sensitivity computes the exact distribution of a Lagrangian coherent structure measure for the first time, and may be used to employ coherent structures as `data' in a Data Assimilation scheme. This idea is not new, and is explored in \cite{MacleanEtAl_2017_CoherentStructureApproach,MorzfeldEtAl_2018_FeaturebasedDataAssimilation,Schlueter-KuckDabiri_2019_ModelParameterEstimation}, albeit that the likelihood of a coherent structure was not computable. Our work fills that gap.
Moreover, most traditional LCS measures are completely deterministic measures, not accounting for any uncertainty in the driving velocity field, and the sensitivity of these methods to such uncertainty has not been investigated in detail.
The robustness of several LCS methods to stochastic noise has recently been explored in \cite{BadzaEtAl_2023_HowSensitiveAre}, but via stochastic simulation and summary statistics.
In this paper we have presented a theoretical result for characterising Lagrangian trajectory uncertainty, which can be used to perform a purely theoretical analysis of such sensitivity in LCS computations.


\appendix
\clearpage
\section{Preliminaries for proofs}\label{app:gauss}

There are several generic results and inequalities that we use several times throughout our proofs, which we state here for completeness.
We write \(W_t = \left(W_t^{(1)}, \hdots, W_t^{(m)}\right)^{\T}\) as the components of the canonical \(m\)-dimensional Wiener process, where each \(W_t^{(i)}\) are mutually independent 1-dimensional Wiener processes.
The flow map \(F_0^t: \R^n \to \R^n\) summarises solutions of the deterministic model \cref{eqn:ode_det}, given by
\begin{equation}
	F_{0}^{t}(x) = x + \int_{0}^{t}{u\left(F_{0}^{\tau}(x), \tau\right)\dif \tau},
	\label{eqn:flow_map_int}
\end{equation}
for an initial condition \(x \in \R^n\).
The spatial gradient (with respect to the initial condition) of the flow map solves the equation of variations associated with \cref{eqn:ode_det}, i.e.
\begin{equation}
	\dpd{}{t}\nabla F_0^t(x) = \nabla u\left(F_0^t(x), t\right)\nabla F_0^t(x).
	\label{eqn:eqn_of_vars}
\end{equation}

For any real numbers \(x_1,\hdots,x_p \geq 0\) and \(r \geq 1\),
\begin{equation}
	\left(\sum_{i=1}^p{x_i}\right)^r \leq p^{r-1}\sum_{i=1}^p{x_i^r}.
	\label{eqn:trinomial}
\end{equation}
This results from an application of the finite form of Jensen's inequality.
An implication of the equivalence of the \(L_1\) and Euclidean norms and \cref{eqn:trinomial} is that for any \(z \in \R^n\) and \(r \geq 1\),
\begin{equation}
	\norm{z}^r \leq \left(\sum_{i = 1}^n{\abs{z_i}}\right)^r \leq n^{r-1}\sum_{i=1}^n{\abs{z_i}^r},
	\label{eqn:norm_trinomial}
\end{equation}
where \(z_i\) denotes the \(i\)th component of \(z\).
If each component \(z_i\) of a vector \(z\) is bounded by a constant \(K\), then
\begin{equation}\label{eqn:bound_vector}
	\norm{z} \leq \sqrt{n} K.
\end{equation}
Similarly, if \(f: \R \to \R^n\) is a vector-valued function such that each component of \(f\) is integrable over an interval \([0,t]\), then for all \(r \geq 1\),
\begin{equation}
	\norm{\int_0^t{f\left(\tau\right)\dif\tau}}^r \leq t^{r-1}\int_0^t{\norm{f\left(\tau\right)}^r\dif\tau}.
	\label{eqn:convex_integral}
\end{equation}
This inequality results from an application of H\"{o}lder's inequality.

\section{Proof of \Cref{thm:main}}\label{app:main_thm_proof}
To prove the main result, we first require a lemma establishing a bound on the time integral of the expectation of the distance between the SDE solution and the reference deterministic trajectory.

\begin{lemma}\label{lem:z_int_bound}
	Let \(q \geq 1\) be such that \(\delta_q < \infty\), then for all \(\epsilon > 0\) and \(\tau \in [0,T]\)
	\begin{equation*}
		\avg{\int_0^t{\norm{y_\tau^{(\epsilon)} - F_0^t\!\left(x_0\right)}^q\dif\tau}} \leq H_1\!\left(q,t, K_{\nabla u}, K_{\sigma}\right)\epsilon^q + H_2\!\left(q,t, K_{\nabla u}\right)\delta_q^q,
	\end{equation*}
	where
	\begin{align*}
		H_1\!\left(q,t, K_{\nabla u}, K_{\sigma}\right) & \coloneqq 3^{q-1} n^{3q/2} K_{\sigma}^{q/2} G_{q/2} t^{q/2 + 1}\exp\left(3^{q-1} K_{\nabla u}^q t^q\right), \\
		H_2\!\left(q,t, K_{\nabla u}\right) & \coloneqq 3^{q-1} t \exp\left(3^{q-1} K_{\nabla u}^q t^q\right).
	\end{align*}
\end{lemma}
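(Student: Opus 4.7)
The plan is to set $z_\tau \coloneqq y_\tau^{(\epsilon)} - F_0^\tau\!\left(x_0\right)$ and first derive a pointwise-in-$\tau$ bound on $\avg{\norm{z_\tau}^q}$, then integrate over $\tau \in [0,t]$. Using the integral form of the SDE \cref{eqn:sde_y} together with the flow map representation \cref{eqn:flow_map_int}, one writes
\[
z_\tau = (x - x_0) + \int_0^\tau \left[u\!\left(y_s^{(\epsilon)}, s\right) - u\!\left(F_0^s(x_0), s\right)\right] \dif s + \epsilon \int_0^\tau \sigma\!\left(y_s^{(\epsilon)}, s\right) \dif W_s.
\]
Applying the three-term inequality \cref{eqn:trinomial} (with $p = 3$) to $\norm{z_\tau}^q$ splits the bound into contributions from the initial discrepancy, the drift mismatch, and the stochastic term.

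I would then estimate each of the three pieces separately. The initial difference contributes $3^{q-1}\delta_q^q$ upon taking expectations. For the drift term, the Lipschitz bound \ref{hyp:u_bounds} on $u$ combined with \cref{eqn:convex_integral} gives
\[
\norm{\int_0^\tau \left[u\!\left(y_s^{(\epsilon)}, s\right) - u\!\left(F_0^s(x_0), s\right)\right] \dif s}^q \leq \tau^{q-1} K_{\nabla u}^q \int_0^\tau \norm{z_s}^q \dif s,
\]
after which $\tau^{q-1}$ can be crudely replaced by $t^{q-1}$ to prepare for a classical linear Gr\"onwall argument. For the stochastic term, the strategy is to apply \cref{eqn:norm_trinomial} to reduce the vector It\^o integral to its $n$ components and then each component to its $m$ scalar It\^o integrals, invoke the Burkholder--Davis--Gundy inequality on each scalar integral (yielding the constant $G_{q/2}$), and use $\norm{\sigma} \leq K_\sigma$ from \ref{hyp:sigma_bounds} to bound the resulting quadratic variations. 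The expected contribution then scales as $\tau^{q/2}$ with the appropriate powers of $n$, $K_\sigma$ and the BDG constant.

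Assembling these estimates and taking expectations produces an integral inequality of the form
\[
\avg{\norm{z_\tau}^q} \leq A(\epsilon, t) + 3^{q-1} t^{q-1} K_{\nabla u}^q \int_0^\tau \avg{\norm{z_s}^q} \dif s,
\]
where $A(\epsilon, t)$ collects the initial and noise contributions and is monotone in $\tau$. A single application of Gr\"onwall's inequality then produces the exponential factor $\exp\!\left(3^{q-1} K_{\nabla u}^q t^q\right)$ that appears in both $H_1$ and $H_2$. Integrating this pointwise bound over $\tau \in [0,t]$ picks up an extra factor of $t$ in the $\delta_q^q$ coefficient (yielding $H_2$) and converts the $\tau^{q/2}$ in the noise term into $t^{q/2+1}$ (yielding $H_1$).

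The main technical care will be in the BDG step: tracking the dimensional factors (powers of $n$) that accrue when Jensen-type inequalities \cref{eqn:trinomial} and \cref{eqn:norm_trinomial} are applied to pass between vector and scalar It\^o integrals, and aligning these with the bound on the entries of $\sigma$ inherited from \ref{hyp:sigma_bounds}, so that the final constant in front of $\epsilon^q$ matches the claimed form of $H_1$. Everything else is a routine combination of triangle-type inequalities, expectation, and Gr\"onwall.
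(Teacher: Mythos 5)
Your proposal is correct and follows essentially the same route as the paper's proof: the same three-term decomposition via \cref{eqn:trinomial}, the row-wise Burkholder--Davis--Gundy estimate with \ref{hyp:sigma_bounds} for the It\^o term, the Lipschitz bound \ref{hyp:u_bounds} feeding a Gr\"onwall argument for the pointwise-in-$\tau$ moment bound, and a final integration over $[0,t]$ (with the implicit Tonelli interchange) to produce $H_1$ and $H_2$. No substantive differences to report.
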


\begin{proof}
	Consider the integral form of \cref{eqn:sde_y},
	\[
		y_t^{(\epsilon)} = x + \int_0^t{u\left(y_\tau^{(\epsilon)}, \tau\right)\dif\tau} + \epsilon\int_0^t{\sigma\left(y_\tau^{(\epsilon)}, \tau\right)\dif W_\tau}.
	\]
	Using \cref{eqn:flow_map_int},
	\[
		y_t^{(\epsilon)} - F_0^t\!\left(x_0\right) = x - x_0 + \int_0^{t}{\left(u\left(y_\tau^{(\epsilon)}, \tau\right) - u\left(F_0^{\tau}\!\left(x_0\right), \tau\right)\right)\dif\tau} + \epsilon\int_0^t{\sigma\left(y_\tau^{(\epsilon)}, \tau\right)\dif W_\tau},
	\]
	and so
	\begin{equation}
		\avg{\norm{y_t^{(\epsilon)} - F_0^t\!\left(x_0\right)}^q} \leq \begin{multlined}[t]
			3^{q-1}\avg{\norm{x - x_0}^q} \\
			+ 3^{q-1}t^{q-1}\avg{\int_0^{t}{\norm{u\!\left(y_\tau^{(\epsilon)}, \tau\right) - u\left(F_0^{\tau}\!\left(x_0\right), \tau\right)}^q\dif\tau}} \\
			+ 3^{q-1}\epsilon^q\avg{\norm{\int_0^t{\sigma\!\left(y_\tau^{(\epsilon)}, \tau\right)\dif W_\tau}}^q},
		\end{multlined}
		\label{eqn:norm_y_t_tmp}
	\end{equation}
	using \cref{eqn:trinomial} followed by \cref{eqn:convex_integral}, and taking the expectation on both sides.

	Next, we establish a bound on the It\^o integral term in \cref{eqn:norm_y_t_tmp}.
	For \(i \in \set{1,\hdots,n}\), let \(\sigma_{i\cdot}\) denote the \(i\)th row of \(\sigma\).
	Define the stochastic process
	\[
		M_\tau^{(i)} \coloneqq \sigma_{i\cdot}\!\left(y_\tau^{(\epsilon)}, \tau\right)
	\]
	for \(\tau \in [0,t]\), so that
	\[
		\left[\int_0^t{\sigma\!\left(y_\tau^{(\epsilon)}, \tau\right)\dif W_\tau}\right]_i = \int_0^t{M_\tau^{(i)}\dif W_\tau}.
	\]
	Since \(y_t^{(\epsilon)}\) is a strong solution to \cref{eqn:sde_y}, we have that (e.g. see Definition 6.1.1 of \cite{KallianpurSundar_2014_StochasticAnalysisDiffusion})
	\[
		\int_0^t{\norm{M_\tau^{(i)}}^2\dif\tau} \leq \int_0^t{nK_\sigma^2\dif\tau} < \infty, \quad \text{almost surely},
	\]
	so we can apply the Burkholder-Davis-Gundy inequality (e.g. \cite[Thm. 5.6.3]{KallianpurSundar_2014_StochasticAnalysisDiffusion}) to \(M_\tau\), which asserts that there exists a constant \(G_{q/2} > 0\) depending only on \(q\) such that
	\begin{align*}
		\avg{\abs{\int_0^t{M_\tau^{(i)}\dif W_\tau}}^{q}} & \leq G_{q/2} \avg{\left(\int_{0}^t{\norm{\sigma_{i\cdot}\!\left(y_\tau^{(\epsilon)}, \tau\right)}^2\dif\tau}\right)^{q/2}} \\
		                                                  & \leq G_{q/2} n^p K_{\sigma}^{q/2} t^{q/2},
	\end{align*}
	where the second inequality uses \ref{hyp:sigma_bounds}.
	Then,
	\begin{equation}
		\avg{\norm{\int_0^t{\sigma\!\left(y_\tau^{(\epsilon)}, \tau\right)\dif W_\tau}}^{q}} \leq n^{3q/2} K_\sigma^{q/2} G_{q/2} t^{q/2},
		\label{eqn:z_sigma_bound}
	\end{equation}
	using \cref{eqn:norm_trinomial}.

	Applying the bound \cref{eqn:z_sigma_bound} to \cref{eqn:norm_y_t_tmp}, we have
	\begin{equation}\label{eqn:y_F_diff_inter}
		\avg{\norm{y_t^{(\epsilon)}\!-\!F_0^t\!\left(x_0\right)}^q} \leq \begin{multlined}[t]
			3^{q-1}\delta_q^q + 3^{q-1}\epsilon^q n^{3q/2}K_{\sigma}^{q/2} G_{q/2} t^{q/2} \\
			+ 3^{q-1}t^{q-1}\avg{\int_0^{t}{\norm{u\!\left(y_\tau^{(\epsilon)}, \tau\right) - u\!\left(F_0^{\tau}\!\left(x_0\right), \tau\right)}}^q\dif\tau}.
		\end{multlined}
	\end{equation}
	We note that \(\avg{\norm{y_t^{(\epsilon)} - F_0^t\!\left(x\right)}^q} < \infty\) from \ref{hyp:u_bounds}, so by Tonelli's theorem (e.g. \cite[Thm. 2.3.9]{Bremaud_2020_ProbabilityTheoryStochastic}),
	\begin{equation*}
		\avg{\int_0^{t}{\norm{y_\tau^{(\epsilon)} - F_0^\tau\!\left(x_0\right)}^q\dif\tau}} = \int_0^{t}{\avg{\norm{y_\tau^{(\epsilon)} - F_0^\tau\!\left(x_0\right)}^q}\dif\tau}.
		\label{eqn:y_tonelli}
	\end{equation*}
	Now, using the Lipschitz continuity of \(u \) from \ref{hyp:sigma_deriv_bound} on \cref{eqn:y_F_diff_inter} and interchanging the expectation and integral,
	\[
		\avg{\norm{y_t^{(\epsilon)} - F_0^t\!\left(x_0\right)}^q} \leq \begin{multlined}[t]
			3^{q-1}\delta_q^q + 3^{q-1} K_{\nabla u}^q t^{q-1} \int_0^{t}{\avg{\norm{y_\tau^{(\epsilon)} - F_0^{\tau}\!\left(x_0\right)}^q}\dif\tau} \\
			+ 3^{q-1}\epsilon^q n^{3q/2}K_{\sigma}^{q/2} G_{q/2} t^{q/2}.
		\end{multlined}
	\]
	Applying Gr\"{o}nwall's inequality and using the monotonicity of the resulting bound in \(t\), we have that for any \(\tau \in [0,t]\),
	\[
		\avg{\norm{y_\tau^{(\epsilon)} - F_0^\tau\!\left(x_0\right)}^q}  \leq \begin{multlined}[t]
			3^{q-1}\epsilon^q n^{3q/2}S^{q/2} G_{q/2} t^{q/2}\exp\left(3^{q-1} K_{\nabla u}^q t^q\right) \\
			+ 3^{q-1}\exp\!\left(3^{q-1} K_{\nabla u}^q t^q\right)\delta_q^q .
		\end{multlined}
	\]
	Integrating both sides with respect to time
	and again using Tonelli's theorem, we have
	\begin{align*}
		\avg{\int_0^t{\norm{y_\tau^{(\epsilon)} - F_0^\tau\!\left(x_0\right)}^q\dif\tau}} \leq \begin{multlined}[t]
			                                                                                       3^{q-1}n^{3q/2}S^{q/2} G_{q/2} t^{q/2 + 1}\exp\!\left(3^{q-1} K_{\nabla u}^q t^q\right)\epsilon^q  \\
			                                                                                       + 3^{q-1}t\exp\left(3^{q-1} K_{\nabla u}^q t^q\right)\delta_q^q ,
		                                                                                       \end{multlined}
	\end{align*}
	as desired.
\end{proof}

With these bounds established, we can now prove \Cref{thm:main}.
Subtracting the integral forms of \cref{eqn:linear_sde_inform} and \cref{eqn:sde_y} gives
\begin{align*}
	y_t^{(\epsilon)} - l_t^{(\epsilon)} & = \begin{multlined}[t]
		                                        \int_0^t{\left[u\!\left(y_\tau^{(\epsilon)}, \tau\right) - u\!\left(F_0^\tau\!\left(x_0\right), \tau\right) - \nabla u\!\left(F_0^\tau\!\left(x_0\right)\right)\left(l_\tau^{(\epsilon)} - F_0^\tau\!\left(x_0\right)\right)\right]\dif\tau} \\
		                                        + \int_0^t{\left[\epsilon\sigma\!\left(y_\tau^{(\epsilon)}, \tau\right) - \epsilon\sigma\!\left(F_0^{\tau}\!\left(x_0\right), \tau\right)\right]\dif W_\tau}
	                                        \end{multlined}              \\
	                                    & = \begin{multlined}[t]
		                                        \int_0^t{\left[u\!\left(y_\tau^{(\epsilon)}, \tau\right) - \left(u\!\left(F_0^\tau\!\left(x_0\right), \tau\right) + \nabla u\!\left(F_0^\tau\!\left(x_0\right)\right)\left(y_\tau^{(\epsilon)} - F_0^\tau\!\left(x_0\right)\right)\right)\right]\dif\tau} \\
		                                        + \int_0^t{\nabla u\!\left(F_0^\tau\!\left(x_0\right), \tau\right)\left[y_\tau^{(\epsilon)} - l_\tau^{(\epsilon)}\right]\dif \tau} \\
		                                        + \epsilon\int_0^t{ \left[\sigma\!\left(y_\tau^{(\epsilon)}, \tau\right) - \sigma\!\left(F_0^{\tau}\!\left(x_0\right), \tau\right)\right]\dif W_\tau}
	                                        \end{multlined} \\
	                                    & = A(t) + B(t) + \epsilon C(t),
\end{align*}
where
\begin{align*}
	A(t) & \coloneqq \int_0^t{\left[u\!\left(y_\tau^{(\epsilon)}, \tau\right) - \left(u\!\left(F_0^\tau\!\left(x_0\right), \tau\right) + \nabla u\!\left(F_0^\tau\!\left(x_0\right)\right)\left(y_\tau^{(\epsilon)} - F_0^\tau\!\left(x_0\right)\right)\right)\right]\dif\tau} \\
	B(t) & \coloneqq \int_0^t{\nabla u\!\left(F_0^\tau\!\left(x_0\right), \tau\right)\left[y_\tau^{(\epsilon)} - l_\tau^{(\epsilon)}\right]\dif \tau}                                                                                                                          \\
	C(t) & \coloneqq \int_0^t{\left[\sigma\!\left(y_\tau^{(\epsilon)}, \tau\right) - \sigma\!\left(F_0^{\tau}\!\left(x_0\right), \tau\right)\right]\dif W_\tau}.
\end{align*}
Then, using \cref{eqn:trinomial} and taking expectation,
\begin{equation}
	\avg{\norm{y_t^{(\epsilon)} - l_t^{(\epsilon)}}^r} \leq 3^{r-1}\left(\avg{\norm{A(t)}^r} + \avg{\norm{B(t)}^r} + \epsilon^r\avg{\norm{C(t)}^r}\right) .
	\label{eqn:diff_decomp}
\end{equation}
First consider \(A(t)\), for which the integrand is the expected difference between the drift \(u\) evaluated along SDE solution and the first-order Taylor expansion of \(u\) about the reference deterministic trajectory.
Since for any \(t \in [0,T]\), \(u\left(\cdot, t\right)\) is twice continuously differentiable under \ref{hyp:coef_cont}, for each \(i = 1,\hdots,n\) there exists by Taylor's theorem (e.g. see \cite[Cor. A9.3.]{HubbardHubbard_2009_VectorCalculusLinear}) a function \(R_i: \R^n \times [0,T] \to \R\) such that
\begin{equation}
	u_i\!\left(z, \tau\right) = u_i\!\left(F_0^\tau\!\left(x_0\right), \tau\right) + \left[\nabla u_i\left(F_0^\tau\!\left(x_0\right), \tau\right)\right]\left(z - F_0^\tau\!\left(x_0\right)\right) + R_i\!\left(z, \tau\right)
	\label{eqn:taylor_expan}
\end{equation}
for any \(z \in \R^n\), where \(u_i\) denotes the \(i\)th component of \(u\).
The function \(R_i\) satisfies
\begin{equation}
	\abs{R_i(z, \tau)} \leq \frac12\norm{\nabla\nabla u_i\left(F_0^\tau(x), t\right)}\norm{z - F_0^\tau\!\left(x_0\right)}^2 \leq \frac{K_{\nabla\nabla u}}{2}\norm{z - F_0^\tau\!\left(x_0\right)}^2.
	\label{eqn:rem_ineq}
\end{equation}
Let \(R\!\left(z, \tau\right) \coloneqq \left( R_1\!\left(z, \tau\right), \hdots, R_n\!\left(z, \tau\right)\right)^{\T}\), then
\[
	A(t) = \int_0^t{R\left(y_{t}^{(\epsilon)}, \tau\right)\dif\tau},
\]
and since each component of \(R\) is bounded as in \cref{eqn:rem_ineq}, using \cref{eqn:bound_vector}
\[
	\norm{R\!\left(y_t^{(\epsilon)}, \tau\right)} \leq \frac{\sqrt{n} K_{\nabla\nabla u}}{2}\norm{y_t^{(\epsilon)} - F_0^\tau\!\left(x_0\right)}^2.
\]
Taking the norm and expectation then gives
\begin{align*}
	\avg{\norm{A(t)}^r} & = \avg{\norm{\int_0^t{R\!\left(y_t^{(\epsilon)}, \tau\right)\dif\tau}}^r}                                                                                                                  \\
	                    & \leq t^{r-1}\avg{\int_0^t{\norm{R\!\left(y_\tau^{(\epsilon)}, \tau\right)}^r\dif\tau}}                                                                                                     \\
	                    & \leq \frac{t^{r-1}n^{r/2}K_{\nabla\nabla u}^r}{2^r}\avg{\int_0^t{\norm{y_\tau^{(\epsilon)} - F_0^\tau\!\left(x_0\right)}^{2r}\dif\tau}}                                                    \\
	                    & \leq \begin{multlined}[t]
                     \frac{t^{r-1} n^{r/2} K^r_{\nabla\nabla u}H_1\!\left(2r,t, K_{\nabla u}, K_{\sigma}\right)}{2^r}\epsilon^{2r} \\ 
                     + \frac{t^{r-1} n^{r/2} K_{\nabla\nabla u}^r H_2\!\left(2r,t, K_{\nabla u}\right)}{2^{r}}\delta_{2r}^{2r}, 
                            \end{multlined}\numberthis\label{eqn:A_ineq}
\end{align*}
where the first inequality uses \cref{eqn:convex_integral}, and \(H_1\) and \(H_2\) are obtained from \Cref{lem:z_int_bound}.

Next, consider \(B(t)\), for which
\begin{equation}\label{eqn:B_ineq}
	\avg{\norm{B(t)}^r} \leq \int_0^t{t^{r-1}K_{\nabla u}^r\avg{\norm{y_\tau^{(\epsilon)} - l_t^{(\epsilon)}}^r}\dif\tau}.
\end{equation}
using \cref{eqn:convex_integral} and then \ref{hyp:u_bounds}, and interchanging the expectation and the integral uses the fact that that \(\avg{\norm{y_\tau^{(\epsilon)}}} < \infty\) and \(\avg{\norm{l_\tau^{(\epsilon)}}} < \infty\).

Finally, consider \(C(t)\).
For each \(i \in \set{1,\hdots, n}\), define the stochastic process
\[
	N_\tau^{(i)} \coloneqq \sigma_{i\cdot}\left(y_\tau^{(\epsilon)},\tau\right) - \sigma_{i\cdot} \left(F_0^\tau\!\left(x_0\right), \tau\right).
\]
Then, the \(i\)th component of \(C(t)\) is
\[
	\left[C(t)\right]_i = \int_0^t{N_\tau^{(i)}\dif W_\tau}.
\]
From \ref{hyp:sigma_bounds} and using \cref{eqn:bound_vector},
\[
	\int_0^t{\norm{N_\tau^{(i)}}^2\dif\tau} \leq \int_0^t{4nK_\sigma^2\dif\tau} < \infty,
\]
so we can apply the Burkholder-Davis-Gundy inequality on \(N_\tau^{(i)}\) to write
\begin{align*}
	\avg{\abs{\left[C(t)\right]_i}^{r}} & \leq G_{r/2}\avg{\left(\int_{0}^t{\norm{\sigma_{i\cdot}\left(y_\tau^{(\epsilon)}, \tau\right) - \sigma_{i\cdot} \left(F_0^\tau\!\left(x_0\right), \tau\right)}^2\dif\tau}\right)^{r/2}} \\
	                                    & \leq G_{r/2}\avg{\left(\int_0^{t}{ K_{\nabla\sigma}^2\norm{y_\tau^{(\epsilon)} - F_0^\tau\!\left(x_0\right)}^2\dif\tau}\right)^{r/2}}                                                   \\
	                                    & \leq G_{r/2} K_{\nabla \sigma}^r t^{r/2 - 1}\avg{\int_0^t{\norm{y_\tau^{(\epsilon)} - F_0^\tau\!\left(x_0\right)}^r\dif\tau}}                                                           \\
	                                    & \leq \begin{multlined}[t]
		                                           G_{r/2} K_{\nabla\sigma}^r t^{r/2 - 1} H_1\!\left(r,t, K_{\nabla u}, K_{\sigma}\right)\epsilon^r \\ 
                                             + G_{r/2} K_{\nabla\sigma}^r t^{r/2 - 1} H_2\!\left(r,t, K_{\nabla u}\right)\delta_r^r,
                                           \end{multlined} \numberthis\label{eqn:N_comp_ineq}
\end{align*}
where the second inequality uses the Lipschitz condition on \(\sigma\) in \ref{hyp:sigma_deriv_bound}, the third inequality uses \cref{eqn:convex_integral}, and the fourth inequality uses \Cref{lem:z_int_bound} with \(q = r\).
Then, we have
\begin{equation}\label{eqn:C_ineq}
	\avg{\norm{C(t)}^r} \leq \begin{multlined}[t] 
        n^{r}G_{r/2} K_{\nabla\sigma}^r t^{r/2 - 1} H_1\!\left(r,t, K_{\nabla u}, K_{\sigma}\right) \epsilon^r \\ 
         + n^r G_{r/2} K_{\nabla\sigma}^r t^{r/2 - 1} H_2\!\left(r,t, K_{\nabla u}\right)\delta_r^r,
         \end{multlined}
\end{equation}
using \cref{eqn:norm_trinomial}, and then \cref{eqn:N_comp_ineq}.

Combining \cref{eqn:A_ineq}, \cref{eqn:B_ineq} and \cref{eqn:C_ineq} into \cref{eqn:diff_decomp}, we have
\[
	\avg{\norm{y_t^{(\epsilon)} - l_t^{(\epsilon)}}^r} \leq \begin{multlined}[t]
		\frac{3^{r-1} t^{r-1}n^{r/2}K_{\nabla\nabla u}^r H_1\!\left(2r,t, K_{\nabla u}, K_{\sigma}\right)}{2^r}\epsilon^{2r} \\
		+ \frac{3^{r-1} t^{r-1} n^{r/2} K_{\nabla\nabla u}^r H_2\!\left(2r,t, K_{\nabla u}\right)}{2^r}\delta_{2r}^{2r} \\
		+ \int_0^t{3^{r-1}t^{r-1}K_{\nabla u}^r\avg{\norm{y_t^{(\epsilon)} - l_t^{(\epsilon)}}^r}\dif\tau} \\
		+ 3^{r-1}G_{r/2} K_{\nabla\sigma}^r t^{r/2 - 1} H_1\!\left(r,t, K_{\nabla u}, K_{\sigma}\right)\epsilon^{2r} \\
		+ 3^{r-1} G_{r/2} K_{\nabla\sigma}^r t^{r/2 - 1} H_2\!\left(r,t, K_{\nabla u}\right)\epsilon^r\delta_r^r.
	\end{multlined}
\]
Applying Gr\"{o}nwall's inequality, noting that \(H_1\) and \(H_2\) are non-decreasing in \(t\), we have
\[
	\avg{\norm{y_t^{(\epsilon)} - l_t^{(\epsilon)}}^r} \leq \begin{multlined}[t]
		\frac{3^{r-1} t^{r-1}n^{r/2}K_{\nabla\nabla u}^r H_1\!\left(2r,t, K_{\nabla u}, K_{\sigma}\right)}{2^r}\exp\left(3^{r-1}t^r K_{\nabla u}^r\right)\epsilon^{2r} \\
		+ \frac{3^{r-1} t^{r-1} n^{r/2} K_{\nabla\nabla u}^r H_2\!\left(2r,t, K_{\nabla u}\right)}{2^r}\exp\left(3^{r-1}t^r K_{\nabla u}^r\right)\delta_{2r}^{2r} \\
		+ 3^{r-1}G_{r/2} K_{\nabla\sigma}^r t^{r/2 - 1}\exp\left(3^{r-1}t^r K_{\nabla u}^r\right) H_1\!\left(r,t, K_{\nabla u}, K_{\sigma}\right)\epsilon^{2r} \\
		+ 3^{r-1} G_{r/2} K_{\nabla\sigma}^r t^{r/2 - 1}\exp\left(3^{r-1}t^r K_{\nabla u}^r\right) H_2\!\left(r,t, K_{\nabla u}\right)\epsilon^r\delta_r^r.
	\end{multlined}
\]
Set
\begin{subequations}\label{eqn:bound_defns}
	\begin{align}
		D_1\!\left(r,t, K_{\nabla u}, K_\sigma\right) & \coloneqq 3^{r-1}\exp\left(3^{r-1} t^r K_{\nabla u}^r\right)K_{M}\!\left(r, t, K_{\nabla u}, K_\sigma\right) \\
		D_2\!\left(r,t, K_{\nabla u}\right) & \coloneqq 3^{r-1}t^{r-1} n^{r/2}H_{2}\!\left(2r,t, K_{\nabla u}\right)\exp\left(3^{r-1}t^r K_{\nabla u}^r\right)                                                        \\
		D_3\!\left(r,t, K_{\nabla u}\right) & \coloneqq 3^{r-1}G_{r/2} t^{r/2 - 1} H_2\!\left(r,t, K_{\nabla u}\right)\exp\left(3^{r-1}t^r K_{\nabla u}^r\right),
	\end{align}
\end{subequations}
where 
\[
K_{M}\!\left(r, t, K_{\nabla u}, K_\sigma\right) \coloneqq \max\set{\frac{t^{r-1}n^{r/2}H_{1}\!\left(2r,t, K_{\nabla u}, K_{\sigma}\right)}{2^r},\, G_{r/2} t^{r/2 - 1} H_1\!\left(r,t, K_{\nabla u}, K_{\sigma}\right)},
\]
then we have shown the desired result.

\section{Proof of \Cref{thm:limit_sol}}\label{app:limit_sol_proof}

Next, we show that the strong solution to the linearised SDE \cref{eqn:linear_sde_inform} can be written as the independent sum \cref{eqn:linear_sol}.
Let
\[
	M_t = h\!\left(l_t^{(\epsilon)}, t\right) \coloneqq \frac{1}{\epsilon}\left[\nabla F_0^t\!\left(x_0\right)\right]^{-1}\left(l_t^{(\epsilon)} - F_0^t\!\left(x_0\right)\right),
\]
where \(l_t^{(\epsilon)}\) is the strong solution to \cref{eqn:linear_sde_inform}.
Then, the required derivatives for applying It\^o's Lemma (e.g. see Theorem 5.5.1 of \cite{KallianpurSundar_2014_StochasticAnalysisDiffusion}) are
\begin{align*}
	M_0                                              & = h\!\left(l_0^{(\epsilon)}, 0\right) = \frac{1}{\epsilon}\left(x - x_0\right)                                                                                                                                       \\
	\dpd{h}{t}                                       & = \begin{multlined}[t]
		                                                     -\frac{1}{\epsilon}\left[\nabla F_0^t\!\left(x_0\right)\right]^{-1} \dpd{\nabla F_0^t\!\left(x_0\right)}{t}\left[\nabla F_0^t\!\left(x_0\right)\right]^{-1}\left(l_t^{(\epsilon)} - F_0^t\!\left(x_0\right)\right) \\
		                                                     - \frac{1}{\epsilon}\left[\nabla F_0^t\!\left(x_0\right)\right]^{-1}u\!\left(F_0^t\!\left(x_0\right), t\right)
	                                                     \end{multlined} \\
	\nabla h\!\left(l_t^{(\epsilon)}, t\right)       & = \frac{1}{\epsilon}\left[\nabla F_0^t\!\left(x_0\right)\right]^{-1}                                                                                                                                                 \\
	\nabla\nabla h\!\left(l_t^{(\epsilon)}, t\right) & = O,
\end{align*}
where in computing the \(t\) derivative we have used the fact that \(F_0^t\!\left(x_0\right)\) solves the deterministic ODE \cref{eqn:ode_det}.
Thus,
\begin{align*}
	M_t & = \begin{multlined}[t]
		        \frac{1}{\epsilon}\left(x - x_0\right) + \frac{1}{\epsilon}\int_0^t\left(-\left[\nabla F_0^\tau\!\left(x_0\right)\right]^{-1} \dpd{\nabla F_0^\tau\!\left(x_0\right)}{\tau}\left[\nabla F_0^\tau\!\left(x_0\right)\right]^{-1}\left(l_\tau^{(\epsilon)} - F_0^\tau\!\left(x_0\right) \right)\right. \\
		        \left. - \left[\nabla F_0^\tau\!\left(x_0\right)\right]^{-1}u\!\left(F_0^\tau\!\left(x_0\right), \tau\right)\right. \\
		        \left. + \left[\nabla F_0^\tau\!\left(x_0\right)\right]^{-1}\left[ u\!\left(F_0^\tau\!\left(x_0\right), \tau\right) + \nabla u\!\left(F_0^\tau\!\left(x_0\right), \tau\right) \left(l_\tau^{(\epsilon)} - F_0^\tau\!\left(x_0\right)\right)\right] \right)\dif\tau \\
		        + \int_0^t{\left[\nabla F_0^\tau\!\left(x_0\right)\right]^{-1}\sigma\!\left(F_0^\tau\!\left(x_0\right), \tau\right)\dif W_\tau}
	        \end{multlined} \\
	    & = \begin{multlined}[t]
		        \frac{1}{\epsilon}\left(x - x_0\right) + \frac{1}{\epsilon}\int_0^t\left(-\left[\nabla F_0^\tau\!\left(x_0\right)\right]^{-1} \nabla u\!\left(F_0^\tau\!\left(x_0\right), \tau\right)\left(l_\tau^{(\epsilon)} - F_0^\tau\!\left(x_0\right) \right)\right. \\
		        \left. + \left[\nabla F_0^\tau\!\left(x_0\right)\right]^{-1}\nabla u\!\left(F_0^\tau\!\left(x_0\right), \tau\right) \left(l_\tau^{(\epsilon)} - F_0^\tau\!\left(x_0\right)\right) \right)\dif\tau \\
		        + \int_0^t{\left[\nabla F_0^\tau\!\left(x_0\right)\right]^{-1}\sigma\!\left(F_0^\tau\!\left(x_0\right), \tau\right)\dif W_\tau}
	        \end{multlined}              \\
	    & = \frac{1}{\epsilon}\left(x - x_0\right) + \int_0^t{\left[\nabla F_0^\tau\!\left(x_0\right)\right]^{-1}\sigma\!\left(F_0^\tau\!\left(x_0\right), \tau\right)\dif W_\tau},
\end{align*}
where we reach the second line by using the equation of variations \cref{eqn:eqn_of_vars} satisfied by \(\nabla F_0^\tau\!\left(x_0\right)\).
It follows that
\[
	l_t^{(\epsilon)} = \nabla F_0^t\!\left(x_0\right)\left(x - x_0\right) +  F_0^t\!\left(x_0\right) + \epsilon\int_0^t{\nabla F_0^t\!\left(x_0\right)\left[\nabla F_0^\tau\!\left(x_0\right)\right]^{-1} \sigma\!\left(F_0^\tau\!\left(x_0\right), \tau\right)\dif \tau}
\]
is a strong solution to \cref{eqn:linear_sde_inform}.\ to \cref{eqn:linear_sol}.

By \ref{hyp:init_indep}, \(x\) is independent of the Wiener process \(W_t\), and since independence is preserved under limits and linear transformations it follows that the It\^o integral \(\epsilon \nabla F_0^t\!\left(x_0\right)\int_0^t{L\!\left(x_0, \tau\right)\dif W_\tau}\) and \(\nabla F_0^t\!\left(x_0\right)\left(x - x_0\right)\) are independent.

\section{Proof of \Cref{cor:limit_moments}}\label{app:limit_moments_proof}
We first establish that the It\^o integral of a matrix-valued deterministic function with respect to a multidimensional Wiener process is a multidimensional Gaussian process.
This is a well-known result in the scalar case, and the extension to our case is straightforward.
\begin{lemma}\label{lem:det_gauss}
	Let \(a,b \in \R\) and let \(g: [a,b] \to \R^{n\times n}\) be a matrix-valued deterministic function such that each element of \(g\) is It\^o-integrable.
	Consider the It\^o integral
	\[
		\mathcal{I}[g] \coloneqq \int_{a}^b{g(t)\dif W_t},
	\]
	Then, the integral \(\mathcal{I}[g]\) is a \(n\)-dimensional multivariate Gaussian random variable.
\end{lemma}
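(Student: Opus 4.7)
The strategy is the Cram\'er--Wold device: I would show that every scalar linear combination $c^{\T} \mathcal{I}[g]$, with $c \in \R^n$, is univariate Gaussian, from which multivariate Gaussianity of $\mathcal{I}[g]$ follows immediately. This lets me reduce to the (classical) scalar case componentwise.

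First, fix $c \in \R^n$ and write $W_t = \left(W_t^{(1)}, \ldots, W_t^{(n)}\right)^{\T}$ as a vector of mutually independent scalar Wiener processes, as already noted in \Cref{app:gauss}. The linear combination then expands as
\begin{equation*}
	c^{\T} \mathcal{I}[g] = \int_a^b c^{\T} g(t)\,\dif W_t = \sum_{j=1}^n \int_a^b h_j(t)\,\dif W_t^{(j)},
\end{equation*}
where $h_j(t) \coloneqq \left(c^{\T} g(t)\right)_j$ is a scalar deterministic function that inherits It\^o-integrability from $g$.

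Second, I would invoke the classical fact that for a deterministic $L^2$ integrand $h_j$ and a scalar Wiener process, the It\^o integral $\int_a^b h_j(t)\,\dif W_t^{(j)}$ is a centered Gaussian random variable with variance $\int_a^b h_j(t)^2\,\dif t$. This is standard and proved by approximating $h_j$ by simple step functions, for which the corresponding integral is a finite linear combination of the independent Gaussian increments of $W^{(j)}$ and is thus manifestly Gaussian, and then passing to the $L^2$ limit; Gaussianity is preserved in this limit because the characteristic functions of the approximants converge pointwise to a Gaussian characteristic function.

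Third, I would exploit the mutual independence of $W^{(1)}, \ldots, W^{(n)}$: the $n$ scalar It\^o integrals $\int_a^b h_j(t)\,\dif W_t^{(j)}$ are then mutually independent Gaussian random variables, so their finite sum $c^{\T} \mathcal{I}[g]$ is again Gaussian. Since $c \in \R^n$ was arbitrary, the Cram\'er--Wold theorem yields that $\mathcal{I}[g]$ is multivariate Gaussian. There is no real obstacle in this argument: the only nontrivial ingredient is the scalar result cited in the second step, which is textbook material and would simply be referenced; the vector-valued extension is pure packaging via independence of the components of $W_t$ and Cram\'er--Wold.
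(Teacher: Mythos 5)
Your proof is correct, and it uses essentially the same decomposition as the paper's (expanding the vector It\^o integral into scalar It\^o integrals of deterministic functions against the mutually independent components of \(W_t\), each of which is classically Gaussian). The difference is where the joint Gaussianity comes from. The paper's proof shows that each \emph{component} \(\mathcal{I}[g]_i = \sum_j \mathcal{I}[g_{ij}]\) is a sum of independent scalar Gaussians, hence univariate Gaussian, and then concludes that the vector is multivariate Gaussian; as literally written that last step is a gap, since the components \(\mathcal{I}[g]_1,\dots,\mathcal{I}[g]_n\) are not independent of one another (they share the same driving Brownian motions), and marginal Gaussianity of dependent components does not in general imply joint Gaussianity. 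Your Cram\'er--Wold route closes exactly this gap: by fixing an arbitrary \(c\in\R^n\) and writing \(c^{\T}\mathcal{I}[g] = \sum_j \int_a^b h_j(t)\,\dif W_t^{(j)}\) as a sum of integrals against \emph{distinct, independent} Brownian motions, you get that every linear projection is Gaussian, which is the defining characterization of a multivariate Gaussian vector. So your argument is not just an alternative packaging --- it is the more complete version of the paper's own argument, at the cost only of quantifying over all directions \(c\) rather than just the coordinate directions.
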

\begin{proof}
	For \(i,j \in \set{1,\hdots,n}\), let \(g_{ij}: [a,b] \to \R\) be the \((i,j)\)th element of \(g\).
	Then, let
	\[
		\mathcal{I}[g_{ij}] \coloneqq \int_a^b{g_{ij}(t)\dif W_t^{(i)}},
	\]
	so that the \(i\)th element of \(\mathcal{I}[g]\) is
	\[
		\mathcal{I}[g]_i = \sum_{j = 1}^n{\mathcal{I}\left[g_{ij}\right]}.
	\]
	Each \(\mathcal{I}[g_{ij}]\) is an It\^o integral of a deterministic, scalar-valued function with respect to a one-dimensional Brownian motion, which is well-known to be a Gaussian process (e.g. see \cite[Lem. 4.3.11]{Applebaum_2004_LevyProcessesStochastic}).
	Moreover, each element of \(\mathcal{I}[g]\) is the sum of independent Gaussian random variables and is therefore itself Gaussian.
	Hence, \(\mathcal{I}[g]\) follows a multivariate Gaussian distribution.
\end{proof}

Now, we move onto showing \Cref{cor:limit_moments}.
Consider the It\^o integral
\[
	\mathcal{I}[L] = \int_0^t{L\!\left(x_0, \tau\right)\dif W_\tau}.
\]
For any fixed \(t \in [0,T]\), the integrand is a deterministic, matrix-valued function, and is therefore follows a \(n\)-dimensional Gaussian distribution.
Moreover \cite{KallianpurSundar_2014_StochasticAnalysisDiffusion},
\[
	\avg{\mathcal{I}[L]} = 0,
\]
and
\[
	\var{\mathcal{I}[L]} = \avg{\left(\int_0^t{L\!\left(x_0, \tau\right)\dif W_\tau}\right)\left(\int_0^t{L\!\left(x_0, \tau\right)\dif W_\tau}\right)^{\T}}.
\]
Let \(L_{ij}\) denote the \((i,j)\)th element of \(L\), then the \((i,j)\)th element of the variance is
\begin{align*}
	\left[\var{\mathcal{I}[L]}\right]_{ij} 
	 & = \sum_{k=1}^{m}\sum_{l=1}^m\avg{\left(\int_0^t{L_{ik}\!\left(x_0, \tau\right)\dif W_{\tau}^{(k)}}\right)\left(\int_0^t{L_{jl}\!\left(x_0, \tau\right)\dif W_{\tau}^{(l)}}\right)} \\
	 & = \begin{multlined}[t]
		     \sum_{k=1}^{m}\avg{\left(\int_0^t{L_{ik}\!\left(x_0, \tau\right)\dif W_{\tau}^{(k)}}\right)\!\left(\int_0^t{L_{jk}\!\left(x_0, \tau\right)\dif W_{\tau}^{(k)}}\right)} \\
		     \!\!+ \sum_{k=1}^{m}\sum_{\substack{l=1 \\ l \neq k}}^m\avg{\left(\int_0^t{L_{ik}\!\left(x_0, \tau\right)\dif W_{\tau}^{(k)}}\right)}\!\avg{\left(\int_0^t{L_{jl}\!\left(x_0, \tau\right)\dif W_{\tau}^{(l)}}\right)}
	     \end{multlined}                                                                        \\
	 & = \sum_{k=1}^m{\int_0^t{L_{ik}\!\left(x_0, \tau\right)L_{jk}\!\left(x_0, \tau\right)\dif\tau}},
\end{align*}
where the second equality  the fact that \(W_t^{(k)}\) is independent of \(W_t^{(l)}\) for \(k \neq l\) and the third equality uses It\^o's isometry \cite{KallianpurSundar_2014_StochasticAnalysisDiffusion}.
Hence, we have that
\[
	\int_0^t{L\!\left(x_0, \tau\right)\dif\tau} \isGauss{0, \int_0^t{L\!\left(x_0, \tau\right)L\!\left(x_0, \tau\right)^{\T}}\dif\tau},
\]
completing the proof of \Cref{thm:limit_sol}.
Next, we show that the mean and covariance of \(l_t^{(\epsilon)}\) are given explicitly by \cref{eqn:mean_expl_eqn} and \cref{eqn:pi_expl_eqn} respectively.
It follows immediately from \cref{eqn:linear_sol} that the mean of \(l_t^{(\epsilon)}\) is
\begin{align*}
	\avg{l_t^{(\epsilon)}} & = \avg{\nabla F_0^t\!\left(x_0\right) \left(x - x_0\right)} + F_0^t\!\left(x_0\right) + \epsilon \nabla F_0^t\!\left(x_0\right)\avg{\int_0^t{L\left(x_0,\tau\right)\dif\tau}} \\
	                       & = \nabla F_0^t(x) \left(\avg{x} - x_0\right) + F_0^t\!\left(x_0\right),
\end{align*}
thus showing \cref{eqn:mean_expl_eqn}.

Since the two summands in \cref{eqn:linear_sol} are independent, the variance of \(l_t^{(\epsilon)}\) is
\begin{align*}
	\var{l_t^{(\epsilon)}} & = \var{\nabla F_0^t\!\left(x_0\right) \left(x - x_0\right)} + \var{\epsilon \nabla F_0^t\!\left(x_0\right)\int_0^t{L\!\left(x_0,\tau\right)\dif W_\tau}}                                      \\
	                       & = \nabla F_0^t\!\left(x_0\right) \left(\var{x} + \epsilon^2\int_0^t{L\!\left(x_0, \tau\right)L\!\left(x_0, \tau\right)^{\T} \dif\tau}\right) \left[\nabla F_0^t\!\left(x_0\right)\right]^{\T}
\end{align*}
where the variance of the It\^o integral was established in \Cref{app:limit_sol_proof}.

Finally, we show \Cref{rem:cov_ode}, i.e. that \(\var{l_t^{(\epsilon)}}\) is the solution to the matrix differential equation \cref{eqn:pi_ode}.
Directly differentiating the expression \cref{eqn:pi_expl_eqn}
\begin{align*}
	\dod{\var{l_t^{(\epsilon)}}}{t} & = \begin{multlined}[t]
		                                    \dpd{\nabla F_0^t\!\left(x_0\right)}{t}\left(\var{x} + \epsilon^2 \int_0^t{L\!\left(x_0, \tau\right)L\!\left(x_0, \tau\right)^{\T}\dif\tau} \right)\left[\nabla F_0^t\!\left(x_0\right)\right]^{\T} \\
		                                    + \nabla F_0^t\!\left(x_0\right)\left(\var{x} + \epsilon^2 \int_0^t{L\!\left(x_0, \tau\right)L\!\left(x_0, \tau\right)^{\T}\dif\tau} \right)\left[\dpd{\nabla F_0^t\!\left(x_0\right)}{t}\right]^{\T} \\
		                                    + \epsilon^2\nabla F_0^t\!\left(x_0\right)L\!\left(x_0, t\right) L\!\left(x_0, t\right)^{\T}\left[\nabla F_0^t\!\left(x_0\right)\right]^{\T}
	                                    \end{multlined}                                                             \\
	                                & = \begin{multlined}[t]
		                                    \nabla u\!\left(F_0^t\!\left(x_0\right), t\right)\nabla F_0^t\!\left(x_0\right)\left(\var{x} + \epsilon^2 \int_0^t{L\!\left(x_0, \tau\right)L\!\left(x_0, \tau\right)^{\T}\dif\tau} \right)\left[\nabla F_0^t\!\left(x_0\right)\right]^{\T} \\
		                                    + \nabla F_0^t\!\left(x_0\right)\left(\var{x} + \epsilon^2 \int_0^t{L\!\left(x_0, \tau\right)L\!\left(x_0, \tau\right)^{\T}\dif\tau} \right)\left[\nabla F_0^t\!\left(x_0\right)\right]^{\T}\left[\nabla u\!\left(F_0^t\!\left(x_0\right), t\right)\right]^{\T} \\
		                                    + \epsilon^2\sigma\!\left(F_0^t\!\left(x_0\right), t\right)\sigma\!\left(F_0^t\!\left(x_0\right), t\right)^{\T}
	                                    \end{multlined} \\
	                                & = \begin{multlined}[t]
		                                    \nabla u\!\left(F_0^t\!\left(x_0\right), t\right) \var{l_t^{(\epsilon)}} + \var{l_t^{(\epsilon)}}\left[\nabla u\!\left(F_0^t\!\left(x_0\right), t\right)\right]^{\T} \\
		                                    + \epsilon^2\sigma\!\left(F_0^t\!\left(x_0\right), t\right)\sigma\!\left(F_0^t\!\left(x_0\right), t\right)^{\T},
	                                    \end{multlined}
\end{align*}
where the second inequality has used the equation of variations \cref{eqn:eqn_of_vars}.

\section{Proof of \Cref{thm:s2_calculation}}\label{app:s2_calculation_proof}
Let \(t \in [0,T]\) and consider the solutions \(y_t^{(\epsilon)}\) to \cref{eqn:sde_y} and \(l_t^{(\epsilon)}\) to \cref{eqn:linear_sde_inform} subject to the fixed initial condition \(x_0 \in \R^n\).
On the vector space of \(n\)-dimensional random vectors with each component having finite expectation and variance, define the function \(\rho\) as
\[
	\rho\!\left(z\right) \coloneqq \norm{\var{z}}^{\frac12}.
\]
Then, \(\rho\) is a semi-norm, which can be verified using properties of the spectral norm and the Cauchy-Schwarz inequality.
This proof is provided in the supplementary materials.
Then,
\begin{align*}
	\abs{\!\norm{\var{y_t^{(\epsilon)}\!}}^{\frac12}\!\!\! - \norm{\var{l_t^{(\epsilon)}\!}}^{\frac12}\!}
	 & = \abs{\rho\!\left(y_t^{(\epsilon)}\right) - \rho\!\left(l_t^{(\epsilon)}\right) }                                                                                                                                   \\
	 & \leq \rho\!\left(y_t^{(\epsilon)} - l_t^{(\epsilon)}\right)                                                                                                                                                          \\
	 & = \norm{\avg{\!\left(y_t^{(\epsilon)}\!- l_t^{(\epsilon)}\right)\!\left(y_t^{(\epsilon)}\!- l_t^{(\epsilon)}\right)^{\T}}\! - \!\avg{y_t^{(\epsilon)}\!- l_t^{(\epsilon)}}\!\avg{y_t^{(\epsilon)}\!- l_t^{(\epsilon)}}^{\T}\!}^{\frac12} \\
	 & \leq \left(\avg{\norm{y_t^{(\epsilon)} - l_t^{(\epsilon)}}^{2}} + \avg{\norm{y_t^{(\epsilon)} - l_t^{(\epsilon)}}}^2\right)^{\frac12}                                                                                    \\
	 & \leq \left(\left(K_{\nabla\nabla u} + K_{\nabla\sigma}\right)D_1(2,t)+ \left(K_{\nabla\nabla u} + K_{\nabla\sigma}\right)^2 D_1(1,t)^2\right)^{1/2}\epsilon^2
\end{align*}
where the first inequality uses the reverse triangle inequality, the second inequality uses the Jensen's inequality and properties of the spectral norm, and the third inequality results from \Cref{thm:main}.
Thus,
\[
	\abs{\norm{\frac{1}{\epsilon^2}\var{y_t^{(\epsilon)}}}^{\frac12} - \norm{\frac{1}{\epsilon^2}\var{l_t^{(\epsilon)}}\!}^{\frac{1}{2}}} \leq \begin{multlined}[t]
 \left(\left(K_{\nabla\nabla u} + K_{\nabla\sigma}\right)D_1(2,t) \right. \\
     \left. + \left(K_{\nabla\nabla u} + K_{\nabla\sigma}\right)^2 D_1(1,t)^2\right)^{\frac{1}{2}}\epsilon,
\end{multlined}
\]
and so taking the limit of \(\epsilon\) to zero and squaring both sides,
\begin{equation}\label{eqn:sigma_lim}
	\lim_{\epsilon\downarrow 0}\norm{\frac{1}{\epsilon^2}\var{y_t^{(\epsilon)}}} =
	\lim_{\epsilon\downarrow 0}\norm{\frac{1}{\epsilon^2}\var{l_t^{(\epsilon)}}} .
\end{equation}
Now, for \(\epsilon > 0\), define
\begin{align*}
	S^2_{(\epsilon)}(x_0,t) & \coloneqq \sup{\setc{\var{\frac{1}{\epsilon} p^{\T}\left(y_t^{(\epsilon)} - F_0^t\!\left(x_0\right)\right)}}{p \in \R^n, \, \norm{p} = 1}} \\
	                        & = \frac{1}{\epsilon^2}\sup{\setc{p^{\T}\var{y_t^{(\epsilon)}}p}{p \in \R^n, \, \norm{p} = 1}}
\end{align*}
Since \(\var{y_t^{(\epsilon)}}\) is symmetric and positive definite, the Cholesky decomposition provides a lower triangular \(n \times n\) matrix \(\Pi^{(\epsilon)}\) such that \(\var{y_t^{(\epsilon)}} = \Pi^{(\epsilon)}\left[\Pi^{(\epsilon)}\right]^{\T}\), allowing us to write
\begin{align*}
	S^2_{(\epsilon)}(x_0,t) & = \frac{1}{\epsilon^2}\sup\setc{\norm{\Pi^{(\epsilon)}p}^2}{p \in \R^n, \, \norm{p} = 1} \\
	                        & = \frac{1}{\epsilon^2}\norm{\Pi^{(\epsilon)}}^2                                          \\
	                        & = \frac{1}{\epsilon^2}\norm{\var{y_t^{(\epsilon)}(x)}},
\end{align*}
using properties of the spectral norm.
Taking the limit as \(\epsilon\) approaches zero and using \cref{eqn:sigma_lim},
\[
	S^2(x_0,t) = \lim_{\epsilon\downarrow 0} S^2_{(\epsilon)}(x_0,t) = \lim_{\epsilon\downarrow 0}\norm{\frac{1}{\epsilon^2}\var{y_t^{(\epsilon)}}} = \norm{\Sigma_0^t\!\left(x_0\right)},
\]
where \( \Sigma_0^t \) is defined in \Cref{eqn:sigma_def}.
Since \(\Sigma_0^t\!\left(x_0\right)\) is symmetric and positive definite, the operator norm, and therefore \(S^2\!\left(x_0,t\right)\), is given by the largest eigenvalue of \(\Sigma_0^t\!\left(x_0\right)\).

\renewcommand{\bibsection}{}
\par\addvspace{.25in}
\begin{center}
	\footnotesize\uppercase\expandafter{References}
\end{center}

\bibliographystyle{abbrvnat}
{\footnotesize\bibliography{sde_refs}}

\end{document}